\chardef\atsign='100
\newcommand{\cH}{H_{\#}}
\newcommand{\Forall}{\quad\text{for all }}
\newcommand{\HH}{H}
\newcommand{\HHs}{H_{\#}}
\newcommand{\dH}{\dot H}
\newcommand{\bad}{\bm a_d}
\newcommand{\vh}{\mathbb V(\mathcal T)}
\newcommand{\as}{\alpha^*}
\newcommand{\cQ}{\mathcal Q}
\newcommand{\bx}{\mathbf x}
\newcommand{\bP}{\mathbf P}
\newcommand{\Ps}{P_\#}
\theoremstyle{plain}
\newtheorem{theorem}{Theorem}[section]
\newtheorem{lemma}[theorem]{Lemma} 
\newtheorem{proposition}[theorem]{Proposition}
\newtheorem{corollary}[theorem]{Corollary}
\newtheorem{remark}{Remark}[section]
\begin{document}

\title[Fractional powers of the Laplace-Beltrami Operator]
{Approximation of the spectral fractional powers of the Laplace-Beltrami Operator}

\author[A.~Bonito]{Andrea Bonito}
\address[A.~Bonito]{Department of Mathematics, Texas A\&M University, College Station, TX 77843, USA}
\email{bonito@math.tamu.edu}
\thanks{A.B. is partially supported by the NSF Grants DMS-1817691 and DMS-2110811.}

\author[W.~Lei]{Wenyu Lei}
\address[W.~Lei]{SISSA - International School for Advanced Studies, Via Bonomea 265, 34136 Trieste, Italy}
\email{wenyu.lei@sissa.it}

\date{\today}

\begin{abstract}
	We consider numerical approximations of spectral fractional Laplace-Beltrami problems on closed surfaces. The proposed numerical algorithms rely on their Balakrishnan integral representation and consist of a sinc quadrature coupled with standard finite element methods for parametric surfaces. Possibly up to a log term, optimal rates of convergence are observed and derived analytically when the discrepancies between the exact solution and its numerical approximations are measured in $L^2$ and $H^1$. The performances of the algorithms are illustrated in different settings including the approximation of Gaussian fields on surfaces.
\end{abstract}

\keywords{Fractional diffusion, Laplace-Beltrami, FEM parametric methods on surfaces, Gaussian fields}

\subjclass{65M12, 65M15, 65M60,  35S11,  65R20}

\maketitle

\section{Introduction}\label{s:introduction}

In this paper, we consider finite element approximations of the spectral fractional Laplace-Beltrami problem
\begin{equation}\label{e:LB}
	(-\Delta_\gamma)^s \widetilde u = \widetilde f \qquad \textrm{on }\gamma,
\end{equation}
where $\gamma$ is a closed, compact and $C^3$ hyper-surface in $\mathbb R^n$ ($n=2,3$) and where the data $f$ is square integrable in $\gamma$ while satisfying the compatibility condition $\int_\gamma \widetilde f  = 0$.
Such system finds applications for instance in the study of Gaussian random fields on surfaces \cite{lindgren2011explicit, lang2015isotropic,bolin2020numerical}.

As we shall see, the spectral negative fractional powers of the Laplace-Beltrami operator $L:=-\Delta_\gamma$ are defined using the Balakrishnan formula \cite{Balakrishnan60}
\begin{equation}\label{e:frac-int}
	L^{-s}\widetilde f:= \frac{\sin(\pi s)}{\pi} \int_0^\infty \mu^{-s}(\mu I+L)^{-1} \widetilde f\, d\mu, \qquad s\in (0,1).
\end{equation}
This representation is equivalent to the spectral representation more often used as detailed in Section~\ref{ss:dotted}.

On Euclidean domains, a two-step numerical approximation strategy based on \eqref{e:frac-int} is proposed in \cite{BP13,BP15} for the standard Laplacian.
After a change of variable, an exponentially converging sinc quadrature \cite{BLP17}
\begin{equation}\label{e:sinc-approx}
	\cQ^{-s}_k(L) \widetilde f = \frac{k\sin(\pi s)}{\pi} \sum_{\ell=-M}^N e^{(1-s) y_\ell} (e^{y_\ell}I+  L)^{-1} \widetilde f,
\end{equation}
is advocated for the approximation of the outer integral in \eqref{e:frac-int}.
Here $k>0$ is the quadrature spacing, $y_\ell=\ell k$,  and $M, N \in O(k^{-2})$ are positive integers.
In \cite{BP13,BP15}, each of the sub-problems $(e^{y_\ell}I+L)^{-1}\widetilde f$ are approximated independently using a standard continuous piecewise linear finite element method. We refer to \cite{banjai2020exponential} for extensions to $hp$-discretization methods. For completeness, we also mention the reviews \cite{bonito2018numerical,lischke2018fractional} describing alternate algorithms for fractional powers of elliptic operators on Euclidean domains.

Regarding the Laplace-Beltrami operator on closed surface, several algorithms for its approximation are available as well.
Parametric finite element methods \cite{dziuk88} and trace finite element methods \cite{ORG09,reusken14,DER14} rely on polygonal approximations $\Gamma$ of the exact surface $\gamma$. Instead, other numerical methods like the narrow band methods \cite{DDEH09,DER14} are defined on a neighborhood of $\gamma$.
We refer to \cite{bonito2020finite,Dziuk13} for reviews of these methods and others.
All the above mentioned numerical approaches have in common an inherent geometric errors (or consistency errors) resulting from the approximation of $\gamma$, thereby leading to the uncharted territory (in the context of approximation of fractional operators) of non-conforming methods.

The main contribution of this work is to develop a new analysis to incorporate the approximation by parametric finite element methods within the framework developed in \cite{BP15}. The approximate polygonal surface on which the finite element method is defined is related to the exact surface via a $C^2$ orthogonal projection $\bP: \Gamma \rightarrow \gamma$ given by $\bP(\bx) =  \bx - d(\bx) \nabla d(\bx)$, where $d$ is the signed distance function to $\gamma$.
In \cite{dziuk88,demlow2007adaptive} this typical choice of lift is put forward in view of its $O(h^2)$ approximation of the geometry at the expense of requiring surfaces $\gamma$ of class $C^3$. Here $h$ stands for typical diameter of the faces constituting the approximation $\Gamma$.
In opposition, favoring practical implementations and applicability to rougher surfaces, a generic continuous piecewise $C^2$ lift is considered in \cite{mekchay2011afem,bonito2013afem,bonito2016high} leading to a reduced $O(h)$ approximation of the geometry.
The latter is sufficient to control with optimal order the $H^1$, but not $L^2$, discrepancy between the solution to the standard Laplace-Beltrami operator and its parametric linear finite element approximations.
However, the analysis of algorithms based on the integral representation \eqref{e:frac-int} requires space discretization methods to perform simultaneously well in $H^1$ and $L^2$, thereby exacerbating even more the intricate relation between surface approximations and finite element approximations on approximate surfaces.
For completeness, we mention that recently the two approaches were reconciled in \cite{bonito2019posteriori} for $C^3$ surface, where a generic lift is used to define the algorithm leaving the use of the signed distance function only as theoretical tool.
Later in \cite{bonito2020finite}, this technology has been extended to $C^2$ surfaces. Whether these new developments can be applied to the approximations of fractional powers of the Laplace-Beltrami operator remains open.

In Section~\ref{s:prelim}, we introduce preliminary notations and justify the integral representation \eqref{e:frac-int}.
In Section~\ref{s:simulation}, we describe the proposed algorithms and present numerical simulations indicating that the expected rate of convergence of the resulting approximations matches the Euclidean case \cite{BP13, BP15} when using the signed distance function or generic lifts.
Following  \cite{bolin2020numerical}, we also illustrate how the proposed algorithms can be used for efficient approximation of  Gaussian random fields on surfaces. 
The convergence rates obtained in Section~\ref{s:simulation} are investigated in Section~\ref{s:analysis}, where it is shown that the method delivers (up to a logarithmic term) error estimate of order $O(e^{-c/k}) + O(h^{\min(2,\delta+s)})$ in $L^2$  whenever the data $\widetilde f$ as fractional regularity in $L^2$ of order $2\delta$ and of order  $O(e^{-c/k}) + O(h^{\min(2,\delta+s)-1})$ in $H^1$ under the additional restriction $\delta+s>\frac 1 2$.
The main results are Theorem~\ref{t:rate} (space discretization) and Corollary~\ref{c:rate} (total error).

In what follows, we write $a \lesssim b$ to mean $a \leq C b$, with a constant $C$ that does not depend on $a$, $b$, or  the discretization parameters. Finally, $a \sim b$ indicates $a\lesssim b$ and $b\lesssim a$. We also denote $\|F\|_{X\to Y}$ the induced norm of the linear functional $F$ mapping from $X$ to $Y$. We reserve the $\widetilde .$ notation for quantities defined on $\gamma$ and discrete functions in space are denoted with capitals.

\section{Fractional Powers of the Laplace-Beltrami Operator}\label{s:prelim}

The aim of this section is to provide a definition of the fractional powers of the Laplace-Beltrami operator.
We start by introducing interpolation spaces.

\subsection{Interpolation Spaces}

Let $L^2(\mathcal S)$ be the space of square integrable functions on a Lipschitz hyper-surface $\mathcal S$ and $(.,.)_{\mathcal S}$ be the associated inner product.
We use $H^r(\mathcal S)$ to denote the Hilbert space consisting of $L^2$-functions with all weak derivatives of total order $\leq r$ in $L^2$. To account for the compatibility conditions satisfied by $\widetilde f$ in \eqref{e:LB-strong}, we also consider the subspaces of vanishing mean value functions
\begin{align*}
	L^2_\#(\mathcal S) & := H^0(\mathcal S) := \left\{ v\in L^2(\mathcal S) : ( v,1)_{\mathcal S}=0\right\}, \\
	\cH^i(\mathcal S)  & := H^i(\mathcal S) \cap L^2_\#(\mathcal S), \quad i=1,2,3,....
\end{align*}
For $0\leq s \leq 2$, we denote by  $H^s(\mathcal S):= [L^2(\mathcal S),H^2(\mathcal S)]_{s/2,2}$ the fractional spaces, where $[X,Y]_{r,2}$ are the intermediate spaces between $X$ and $Y$ obtained using the real interpolation method.
Similarly, we set $H^s_\#(\mathcal S):= [L^2_\#(\mathcal S),H^2_\#(\mathcal S)]_{s/2,2}$ and point out that they are equivalent to $H^s(\mathcal S) \cap L^2_\#(\mathcal S)$ with a norm equivalence constant depending on $\mathcal S$ according to Lemma 2.1 and A1 in \cite{guermond2009lbb}.
The dual spaces of $H^s(\mathcal S)$ (resp. $H^s_\#(\mathcal S)$) are denoted  $H^{-s}(\mathcal S)$ (resp. $H^{-s}_\#(\mathcal S)$).
We use $\langle .,. \rangle_{A}$ to denote the duality pairing between $A$ and its dual.
Furthermore, we identify $L^2(\mathcal S)$ with its dual leading to the embeddings
$$
	H^s(\mathcal S) \subset L^2(\mathcal S) \subset H^{-s}(\mathcal S), \qquad H^s_\#(\mathcal S) \subset L^2_\#(\mathcal S) \subset \cH^{-s}(\mathcal S), \quad s \geq 0,
$$
which are frequently used in the following without further mentioning it.

\subsection{The Laplace-Beltrami Problem}\label{ss:LB}
We assume that $\gamma$ is a $C^3$ closed hyper-surface of $\mathbb R^n$, $n=2,3$, and that we have access to its signed distance function $d:\mathbb R^n \rightarrow \mathbb R$.  The latter induces an orthogonal projection
\begin{equation}\label{e:orthogonal}
	\mathbf P(\bx) = \bx - d(\bx) \nabla d(\bx)
\end{equation}
to $\gamma$ in a tubular neighborhood $\mathcal N$ of $\gamma$ of fixed diameter depending on the curvature of $\gamma$; refer for e.g. to \cite{bonito2020finite} and the reference therein for more details.
This allows to define the tangential operators using extensions and standard differential operators in $\mathbb R^n$. For example, the surface gradient is given by
$$
	\nabla_\gamma \widetilde v = (I-\nabla d \otimes \nabla d) \nabla v|_{\gamma}, \qquad \Delta_\gamma := \nabla_\gamma \cdot \nabla_\gamma,
$$
where $v$ is the extension of $\widetilde v$ to $\mathcal N$ defined as $v(\bx):=(P\widetilde v)(\bx):=\widetilde v(\bP(\bx))$, $\bx \in \mathcal N$.

Given $\widetilde f \in H^{-1}_\#(\gamma)$, we consider the weak formulation
\begin{equation}\label{e:dirichlet}
	a_\gamma(\widetilde w,\widetilde v):=  \int_\gamma \nabla_\gamma \widetilde w \cdot \nabla_\gamma \widetilde  v =  \langle \widetilde f ,\widetilde v \rangle_{H^1_\#(\gamma)}, \Forall \widetilde v\in \cH^1(\gamma),
\end{equation}
of the Laplace-Beltrami problem
\begin{equation}\label{e:LB-strong}
	-\Delta_\gamma \widetilde w = \widetilde f.
\end{equation}
The bilinear form $a_\gamma: H^1(\gamma) \times H^1(\gamma)$ is continuous and thanks to the  Poincar\'e-Friederich inequality
\begin{equation}\label{e:poincare}
	\| \tilde v \|_{L^2(\gamma)} \lesssim \| \nabla_\gamma  \tilde v\|_{L^2(\gamma)}, \qquad \forall  \tilde v \in H^1_\#(\gamma),
\end{equation}
see e.g. \cite{bonito2020finite}, it is coercive on $H^1_\#(\gamma)$.
Thus, the Lax-Milgram theory guarantees that there exists a unique $\widetilde w \in \cH^1(\gamma)$ satisfying the variational problem \eqref{e:dirichlet}.
We denote by $T: L^2_\#(\gamma)\to \cH^1(\gamma)$ the solution operator $T\widetilde f:=\widetilde w$ and by $L := (-\Delta_\gamma):=T^{-1} $ its inverse with domain $D(L):=\text{Range}(T)$.

\subsection{Fractional Powers and Dotted Spaces}\label{ss:dotted}
By construction, the unbounded operator $L$ is regularly accretive.
The negative fractional powers $L^{-s}=(-\Delta_\gamma)^{-s}$, $s\in (0,1)$,  can be defined via the general framework of regularly accretive operators \cite{Kato61}: for $\widetilde f \in L^2_\#(\gamma)$, we set
\begin{equation}\label{e:balakrishnan}
	(-\Delta_\gamma)^{-s}\widetilde  f := \frac{\sin(\pi s)}{\pi} \int_0^\infty \mu^{-s}(\mu I-\Delta_\gamma)^{-1} \widetilde f\, d\mu .
\end{equation}
The above representation is referred to as the Balakrishnan formula \cite{Balakrishnan60}.
For positive powers, we denote by $D(L^s):= \{ v \in L^2_\#(\mathcal \gamma) \ : \  L^{s-1} v \in D(L)\}$ the domain of the fractional operator $L^s$ and $(-\Delta_\gamma)^s \widetilde f := L(L^{s-1}\widetilde f)$. Our goal is to approximate
\begin{equation}\label{e:sol_u}
	\widetilde u := (-\Delta_\gamma)^{-s} \widetilde f
\end{equation}
for $\widetilde f\in L^2_\#(\mathcal \gamma)$ and $s\in(0,1)$.

Although the Balakrishnan formula \eqref{e:balakrishnan} is the basis of the proposed numerical scheme, we now show it reduces to a more standard spectral decomposition.
The compact embedding $\cH^1(\gamma) \subset  L^2(\gamma)$ guarantees that there exists an $L^2(\gamma)$-orthonormal basis of eigenfunctions $\{\widetilde \psi_j\}_{j=1}^\infty \subset \cH^1(\gamma)$ of $T$ with non-increasing positive eigenvalues $\{\widetilde \mu_j\}_{j=1}^\infty$. Note that $\{\widetilde \psi_j\}_{j=1}^\infty$ is also an $L^2(\gamma)$-orthonormal basis of eigenfunctions of $L$ with corresponding eigenvalues $\widetilde \lambda_j=1/\widetilde \mu_j$. The spectral decompositions of $L$ offers a representation of the fractional powers for $-1<s<1$, namely
\begin{equation}\label{e:L_expension}
	L^s\widetilde v=\sum_{j=1}^\infty \widetilde \lambda_j^s \langle \widetilde v,\widetilde \psi_j \rangle_{H^1_\#(\gamma)}\widetilde  \psi_j.
\end{equation}
Furthermore, the domains $D(L^s)$ of the fractional operators satisfy $D(L^s)=\dH^{2s}_\#(\gamma)$, where for $r \ge -1$, $\dH^r_\#(\gamma)$ is the set of functions in $H^{-1}_\#(\gamma)$ such that the norm
\begin{equation}\label{e:inter-def}
	\|\widetilde v\|_{\dH^r(\gamma)}:=\bigg(\sum_{j=1}^\infty \widetilde \lambda_j^r| \langle \widetilde v,\widetilde \psi_j \rangle_{H^1_\#(\gamma)}|^2\bigg)^{1/2}
\end{equation}
is finite. Refer for instance to \cite[Theorem 2.1]{BP15} for more details on this equivalence.
In particular, we deduce that
\begin{equation}\label{e:regularity-dot}
	(-\Delta_\gamma) \widetilde f \in \dH^{2s+r}(\gamma) \qquad \textrm{for }\widetilde f \in \dH^r(\gamma), \qquad r \ge -1.
\end{equation}

Furthermore, The spaces $\HHs^r(\gamma)$ and $\dH^r_\#(\gamma)$ coincide for $r\in[-1,2]$ with equivalent norms
\begin{equation}\label{e:norm_equi_dotted}
	\| . \|_{H^r(\gamma)} \sim \| . \|_{\dH^r(\gamma)} = \| L^{r/2} . \|_{L^2(\gamma)}.
\end{equation}
We refer to \cite{guermond2009lbb} for the cases $r \in[-1,1]$ and to \cite{BP15} for the extension to $r\in[1,2]$ pointing out that the latter hinges on the fact that
$$
	T\textrm{ is an isomorphism from }L^2_\#(\gamma)\textrm{ to }\HHs^{2}(\gamma)
$$
when $\gamma$ is of class $C^2$, see Theorem 3.3 in \cite{Dziuk13} or Lemma 3 in \cite{bonito2020finite}.

We end this section with some estimates involving the Laplace-Beltrami operator. We start by noting that the Poincar\'e-Friederich inequality \eqref{e:poincare} implies that
\begin{equation}\label{i:bound1}
	\|(\mu I-\Delta_\gamma)^{-1} \widetilde  g\|_{L^2(\gamma)}\lesssim \min(1,\mu^{-1}) \|\widetilde g\|_{L^2(\gamma)},
	\quad\text{for }\mu>0 \text{ and }\widetilde g\in L^2_\#(\gamma).
\end{equation}
One direct consequence of the norm equivalence \eqref{e:norm_equi_dotted} is that \eqref{e:regularity-dot} implies
\begin{equation}\label{e:regularity}
	(-\Delta_\gamma)^{-s} \widetilde f \in H_\#^{2s+r}(\gamma) \qquad \textrm{for }\widetilde f \in H_\#^r(\gamma)
\end{equation}
provided $r \geq -1$ and $2s+r\leq 2$. Furthermore there holds
\begin{equation}\label{e:regularity-estim}
	\| (-\Delta_\gamma)^{-s} \widetilde f\|_{H^{2s+r}(\gamma)} \lesssim \| \widetilde f\|_{H^r(\gamma)}.
\end{equation}
Another direct consequence of the norm equivalence \eqref{e:norm_equi_dotted}, is the following finer version of estimate \eqref{i:bound1}: Let $r\in[0,1]$, $t\in[0,2]$ satisfying $r+t\in [0,2]$. Given $\widetilde g\in \HHs^t(\gamma)$ and $\mu\in(0,\infty)$, there holds
\begin{equation}\label{e:bound}
	\|(-\Delta_\gamma)(\mu I -\Delta_\gamma)^{-1} \widetilde g\|_{\HH^{-r}(\gamma)}\lesssim \mu^{-(r+t)/2} \|\widetilde g\|_{\HH^t(\gamma)} .
\end{equation}
Refer to Lemma~6.5 in \cite{BP15} for a proof; see also Proposition~4.1 and Lemma~4.5 in \cite{BP13}.

\section{Numerical schemes}\label{s:simulation}

\subsection{Outer Quadrature Formula}

We follow \cite{BLP17} and use a sinc numerical quadrature to approximate the integral with respect to the variable $\mu=e^y$ in \eqref{e:balakrishnan}:
\begin{equation}\label{e:approx_quad}
	(-\Delta_\gamma)^{-s} \widetilde f \approx \cQ^{-s}_k(L)\widetilde f := \frac{k\sin(\pi s)}{\pi} \sum_{\ell=-M}^N e^{(1-s) y_\ell} (e^{y_\ell}I-\Delta_\gamma)^{-1} \widetilde f,
\end{equation}
where $k>0$ is the quadrature spacing,
\begin{equation}\label{e:choose}
	N:=\bigg\lceil \frac {\pi^2}{4s k^2}\bigg\rceil \quad \hbox{ and }\quad
	M:=\bigg \lceil \frac {\pi^2}{4(1-s) k^2}\bigg \rceil,
\end{equation}
and $y_\ell := k \ell$, $\ell = -M,...,N$. This particular choice of $M$ and $N$ balances the three sources of quadrature errors: the approximation of the integral from $y_{-M}$ to $y_N$ and the contributions of the integrals from $-\infty$ to $y_{-M}$ as well as from $y_N$ to $\infty$ not accounted for in \eqref{e:approx_quad}, see \cite[Remark 3.1]{BLP17}.
We shall see in Section~\ref{ss:sinc} that this quadrature approximation is exponentially convergent in $k$.

In order to simplify the notations, we denote by  $\widetilde u^{\ell}:= \widetilde u^\ell (f)$ the unique function in $\cH^1(\gamma)$ satisfying  the variational formulation
$$
	e^{y_\ell}(\widetilde u^\ell,\widetilde v) + a_\gamma(\widetilde u^\ell,\widetilde v) = (\widetilde f,\widetilde v),\quad\forall \widetilde v\in \cH^1(\gamma).
$$
Note that for $\widetilde f \in L^2_\#(\gamma)$, we have $\int_\gamma \widetilde f = 0$ and so $\widetilde u^\ell \in \cH^1(\gamma)$ is also characterized as the unique function in $H^1(\gamma)$ satisfying
\begin{equation}\label{e:sub-problem}
	e^{y_\ell}(\widetilde u^\ell,\widetilde v) + a_\gamma(\widetilde u^\ell,\widetilde v) = (\widetilde f,\widetilde v),\quad\forall \widetilde v\in H^1(\gamma),
\end{equation}
which is more amenable to finite element approximations.
Regardless, using $\widetilde u^\ell$ in the quadrature approximation \eqref{e:approx_quad} we arrive at
\begin{equation}\label{e:quadrature}
	\widetilde u_k:=\cQ^{-s}_k(L)\widetilde f = \frac{k\sin(\pi s)}{\pi} \sum_{\ell=-M}^N e^{(1-s) y_\ell} \widetilde u^\ell.
\end{equation}

\subsection{Parametric Finite Element Method}\label{ss:parametric}

We now discuss the approximation of elliptic operator on hype-surface using the parametric finite element methods.
We assume that $\Gamma$ is a $(n-1)$-dimensional polyhedral surface lying in $\mathcal N$ so that $\mathbf P$ in \eqref{e:orthogonal} is a $C^2$ diffeomorphism from $\Gamma$ to $\gamma$.
The orthogonal projection \eqref{e:orthogonal} provided a higher order approximation of the geometry compared to generic lifts $\bP$, see e.g. \eqref{i:q-estimate}.
However, this is at the expense of requiring $C^3$ smoothness on the surface rather than $C^2$.
In \cite{bonito2020finite}, it is shown that it was possible to win both ways and design an algorithm for the Laplace-Beltrami problem using a generic lift while taking advantage of the orthogonality of \eqref{e:orthogonal} only in the analysis for $C^2$ surface. Whether this technology can be adapted to the current context remains an open problem not addressed in the current work (see Conclusions section) but point out that the numerical observations reported in Section~\ref{ss:numerical} indicate that the convergence of the algorithm is not affected when using generic lifts.
We thus restrict our consideration to algorithms defined using the signed distance function lift \eqref{e:orthogonal} and assume, to avoid additional technicalities, that the vertices of $\Gamma$ lie on $\gamma$ (see \cite{demlow09} for a discussion on how to relax such assumption).
Furthermore, from now on we assume that the geometry of $\gamma$ is sufficiently well approximated by $\Gamma$ so that $\Gamma \subset \mathcal N$ and thus the lift $\bP$ is well defined on $\Gamma$.

We denote by $\mathcal T$ the collection of the faces\footnote{We use the names of quantities corresponding to 2-dimensional surfaces even if $\gamma$ could be a curve.}, which are assumed to be all either triangles or quadrilaterals. When using triangular subdivisions, the reference element $\widehat \tau$ is the unit triangle and we set $\mathbb P$ to be the set of linear polynomials. Instead, when the subdivisions are made of quadrilaterals, the reference element $\widehat \tau$ is the unique square and we set $\mathbb P$ to be the set of bi-linear polynomials.
Associated to each face $\tau \in \mathcal T$, we denote by $F_\tau: \widehat \tau \rightarrow \tau \in [\mathbb P]^{n-1}$ the map from the reference element to the physical element.
We let $c_J:=c_J(\mathcal T)$ be such that
$$
	c_J^{-1} | \mathbf{w} | \leq  | D F_\tau \mathbf{w} | \leq c_J | \mathbf{w}|, \qquad \forall \mathbf{w} \in \mathbb R^{n-1}, \forall \tau \in \mathcal T.
$$
Furthermore, we denote by $h_\tau$, $\tau\in \mathcal T$, the diameter of $\tau$ and by $c_q:=c_q(\mathcal T)$ the quasi-uniformity constant
$$
	h:=\max_{T\in \mathcal T} h_T = c_q \min_{T\in \mathcal T} h_T.
$$

For polyhedral surfaces, the number of faces sharing the same vertex is not controlled by the quasi-uniformity constant $c_q$ (think of a surface zigzaging around a vertex as depicted in Fig. 1 in \cite{bonito2019posteriori}). Thus, we denote by $c_v:=c_v(\mathcal T)$ this valence, namely
\begin{equation}\label{e:uniform-vertices}
	c_v:=\max_{ \mathbf v \textrm{ vertex of } \mathcal T} \# \{ T \in \mathcal T \ : \ \mathbf v \textrm{ is a vertex of }T\}.
\end{equation}
The constants appearing in the analysis below may depend on $c_J$, $c_q$ and $c_v$ but not on $h$.
At this point, it is worth mentioning that there are algorithms constructing sequences of surface approximations with uniformly bounded $c_J$, $c_q$ and $c_v$.
Examples are subdivisions $\{\gamma_i:= \mathcal I_i \bP(\overline{\gamma}_i)\}_{i=1}^\infty$, where $\mathcal I_i$ is the standard Lagrange nodal interpolant associated with the subdivision $\overline{\gamma}_i$ and $\{ \overline{\gamma}_i \}_{i=1}^\infty$ is a sequence of uniform refinements of an initial polyhedral surface $\overline{\gamma}_0$; refer for instance to \cite{bonito2012convergence} for subdivisions obtained by uniform refinements and to  \cite{bonito2013afem,bonito2016high} for adaptively refined meshes.


The finite element space associated with the subdivision $\mathcal T$ is denoted $\mathbb V(\mathcal T)$ and is given by
$$
	\mathbb V(\mathcal T):= \{ v \in H^1(\Gamma) \ : \ v|_\tau \circ F_\tau  \in \mathbb P, \qquad \forall \tau\in \mathcal T \}.
$$
Note that $\mathbb V(\mathcal T)$ is not restricted to vanishing mean value functions. Rather, it is tailored for the approximation of the sub-problems \eqref{e:sub-problem}. More precisely, we propose to approximate $\widetilde u^\ell$ in \eqref{e:sub-problem} by $U^\ell \in \mathbb V(\mathcal T)$ satisfying
\begin{equation}\label{e:discrete-parametric}
	e^{y_\ell}\int_{\Gamma} U^\ell V  +
	\int_{\Gamma} \nabla_{\Gamma} U^\ell \cdot\nabla_{\Gamma}V
	= \int_{\Gamma}  V P \widetilde f  \sigma,
	\quad\text{ for all } V\in \mathbb V(\mathcal T) .
\end{equation}
Recall that $P:L^2(\gamma)\to L^2(\Gamma)$ is defined with $P\widetilde f = \widetilde f\circ\bP$ and $\sigma:\Gamma \rightarrow \mathbb R$ is the ratio between the area element of $\gamma$ and $\Gamma$ associated with the parametrization $\mathbf P:\Gamma \rightarrow \gamma$.
We  will also use the notation $P_\#:= \sigma P: L^2(\gamma) \rightarrow L^2(\Gamma)$ and note  that
$\int_{\Gamma} P_\# \widetilde f  = \int_{\Gamma} P \widetilde f \sigma = \int_{\gamma} \widetilde f =0$ whenever $\widetilde f \in L^2_\#(\gamma)$. As a consequence, $U^\ell$ satisfies  $\int_{\Gamma} U^\ell = 0$ as well.
In view of \eqref{e:approx_quad}, the approximation $U_{k} \in \mathbb V(\mathcal T) \cap L^2_\#(\Gamma)$ to $u$ in \eqref{e:sol_u} is defined as
\begin{equation}\label{e:sol_ukh}
	U_{k}:= \cQ_{k}^{-s}(L_{\mathcal T})\Ps\widetilde f = \frac{k\sin(\pi s)}{\pi} \sum_{\ell=-M}^N e^{(1-s) y_\ell} U^\ell \in \mathbb V(\mathcal T) \cap L^2_\#(\Gamma).
\end{equation}

	\begin{remark}[Other approximate right hand sides in \eqref{e:discrete-parametric}]
		The term $P\widetilde f \sigma$ in the right hand side \eqref{e:discrete-parametric} can be substituted by any $O(h^2)$ approximation of $P\widetilde f$ with vanishing mean value on $\Gamma$; refer to  \cite{demlow09} for a discussion on the resulting consistency term. For example, one could consider $I_h(P\widetilde f) - \frac 1 {|\Gamma|} \int_\Gamma I_h(P\widetilde f)$ where $I_h$ stands for the Lagrange interpolant. Whether one can relax the $O(h^2)$ approximation to an $O(h)$ approximation like for the standard case $s=1$ as in \cite{bonito2020finite} remains an open question.
	\end{remark}

\subsection{Numerical Illustration}\label{ss:numerical}
\subsubsection{Convergence Tests}
For this numerical experiment, $\gamma$ is the unit sphere in the three dimensional space.
The data $\widetilde f \in \cH^{\frac 12 -\varepsilon}$, $\varepsilon>0$,  is the step function
\begin{equation}\label{e:right-hand-side}
	\widetilde f(x_1, x_2, x_3) = \left\{
	\begin{aligned}
		 & 1,  & \quad \text{ if } x_3 \ge 0, \\
		 & -1, & \quad \text{ if } x_3 < 0 .  \\
	\end{aligned}
	\right .
\end{equation}
The exact solution $\widetilde u$ is represented using an eigenfunction expansion in the spherical coordinate system $(\theta,\phi)\in [0,\pi]\times[0,2\pi]$, with $\theta$ and $\phi$ indicating the latitudinal and longitudinal directions, respectively. That is
\[
	\widetilde u(\bx) = \widetilde u(\theta,\phi)
	= \sum_{j=1}^{\infty}\sum_{m=-j}^j \lambda_j^{-s} (\widetilde f,\widetilde Y_{mj}(\theta,\phi)) \widetilde Y_{mj}(\theta,\phi),
\]
where $\{\widetilde Y_{mj}\}$ for $j=0,1,\ldots$ and $m=-j,\ldots,j$ is the sequence of spherical harmonic functions. Since $\widetilde f$ is  independent of $\phi$, $(\widetilde f,\widetilde Y_{mj})=0$ for $m \not = 0$. Whence, we obtain
\[
	\widetilde u(\bx) = \widetilde u(\theta,\phi)
	= \sum_{j=1}^{\infty} \lambda_j^{-s} (\widetilde f,\widetilde \zeta_j(\theta,\phi)) \widetilde \zeta_j(\theta,\phi) ,
\]
where $\widetilde \zeta_j(\theta,\phi) = \widetilde Y_{0j}(\theta,\phi) = \sqrt{(2j+1)/(4\pi)}\, \widetilde p_j(\cos(\theta))$ and $\{\widetilde p_j\}_{j\geq 1}$ are the Legendre polynomials.
The first $10,000$ modes are retained for the evaluation of $\widetilde u(\bx)$ so that the overall error is not affected by the truncation.

The discrete surfaces $\Gamma$ are obtained by uniform refinements of the coarse subdivision depicted in Figure~\ref{f:uh-parametric}. The algorithm is implemented using the \texttt{deal.II} finite element library \cite{arndt2020deal}\footnote{see also a tutorial problem at https://www.dealii.org/current/doxygen/deal.II/step\_38.html.}. In terms of the matrix computation for the formula \eqref{e:sol_ukh}, we note that we invert the matrix system on the left hand side of \eqref{e:discrete-parametric} using the direct solver from \texttt{UMFPACK}\footnote{https://people.engr.tamu.edu/davis/suitesparse.html} but multigrid solvers \cite{kornhuber2008multigrid,bonito2012convergence} could be used for larger problems.
The sinc quadrature parameter $k$ is set to $k=0.15$ so that the space discretization error dominates the total error.

\begin{figure}[hbt!]
	\begin{center}
		\includegraphics[scale=0.15]{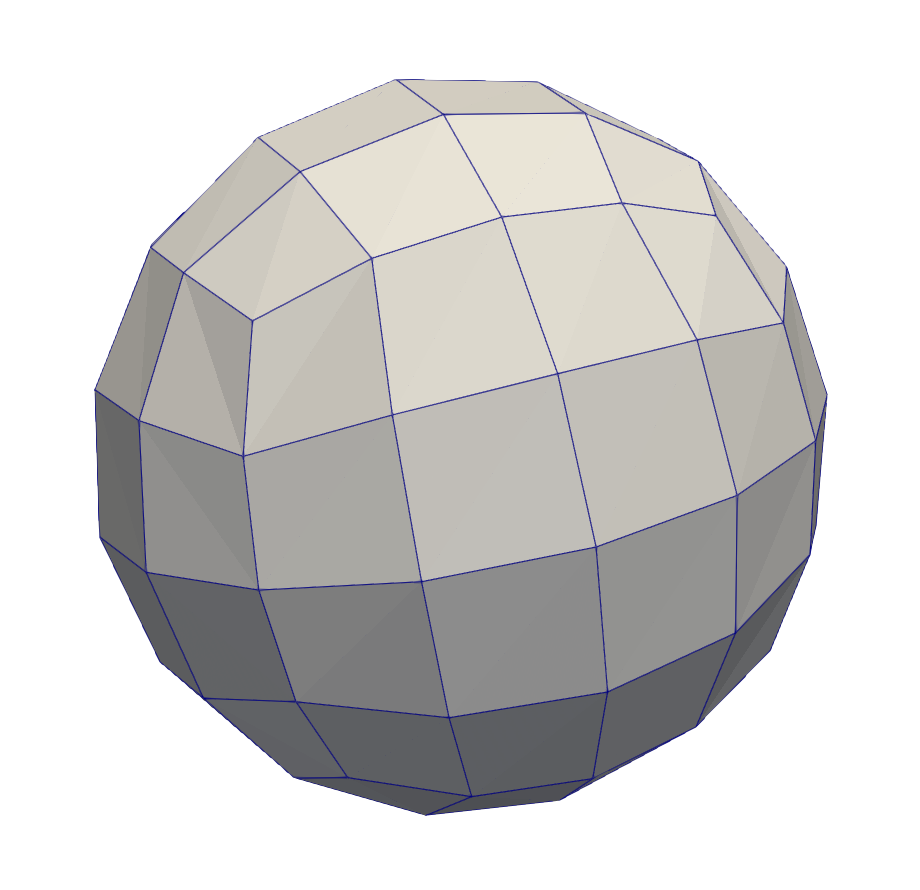}
	\end{center}
	\caption{A coarse quadrilateral subdivision of the unit sphere.}
	\label{f:uh-parametric}
\end{figure}

On Euclidean domains, we expect (Theorem 6.2 in \cite{BP15}) that for $\widetilde f\in \cH^{1/2-\epsilon}(\gamma)$ we have $\|P\widetilde u-U_k\|_{L^2(\Gamma)} \sim h^{\max(2,\frac12+2s)}\sim \#\text{DoFs}^{-\min(1,\frac14+s)}$ and $\|P\widetilde u- U_k\|_{H^1(\Gamma)} \sim h^{\max(1,2s-\frac12)}\sim \#\text{DoFs}^{-\min(\frac12,s-\frac14)}$. This matches the convergence rates observed for the Laplace-Beltrami operator as reported in Figure~\ref{f:convergence-parametric}.
The analysis below (see Theorem~\ref{t:rate}) provides a rigorous justification.
To illustrate the influence of the fractional power $s$ on the exact solution and its approximations, we also report  in Figure~\ref{f:extract} the approximate solution $U_k$ for $s=0.3$ and the values of $U_k$ for different $s$ along a geodesic from the south pole to the north pole.

\begin{figure}[hbt!]
	\begin{center}
		\begin{tabular}{cc}
			\includegraphics[scale=0.5]{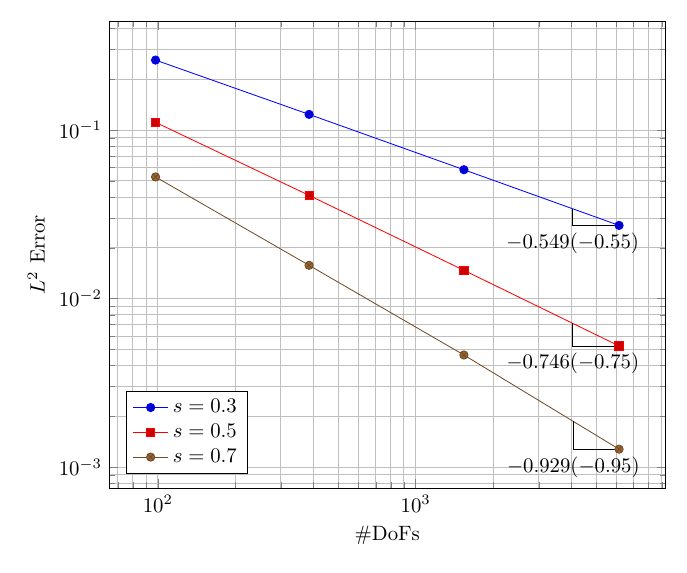} & \includegraphics[scale=0.5]{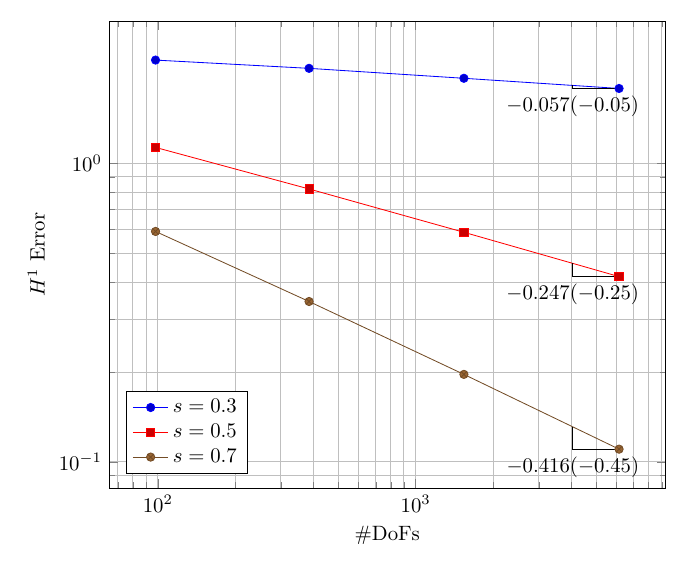}
		\end{tabular}
	\end{center}
	\caption{The errors $\|P\widetilde u-U_k\|_{L^2(\Gamma)}$ (left) and $\| P\widetilde u-U_k\|_{H^1(\Gamma)}$ (right)  against the number of degree of freedoms for $s=0.3$, $0.5$ and $0.7$. For each error plot, the slope of the last segment is reported along with the convergent rate (in parenthesis) guaranteed by Theorem~\ref{t:rate} below.}
	\label{f:convergence-parametric}
\end{figure}
\begin{figure}[hbt!]
	\begin{center}
		\begin{tabular}{cc}
			\includegraphics[height=6cm]{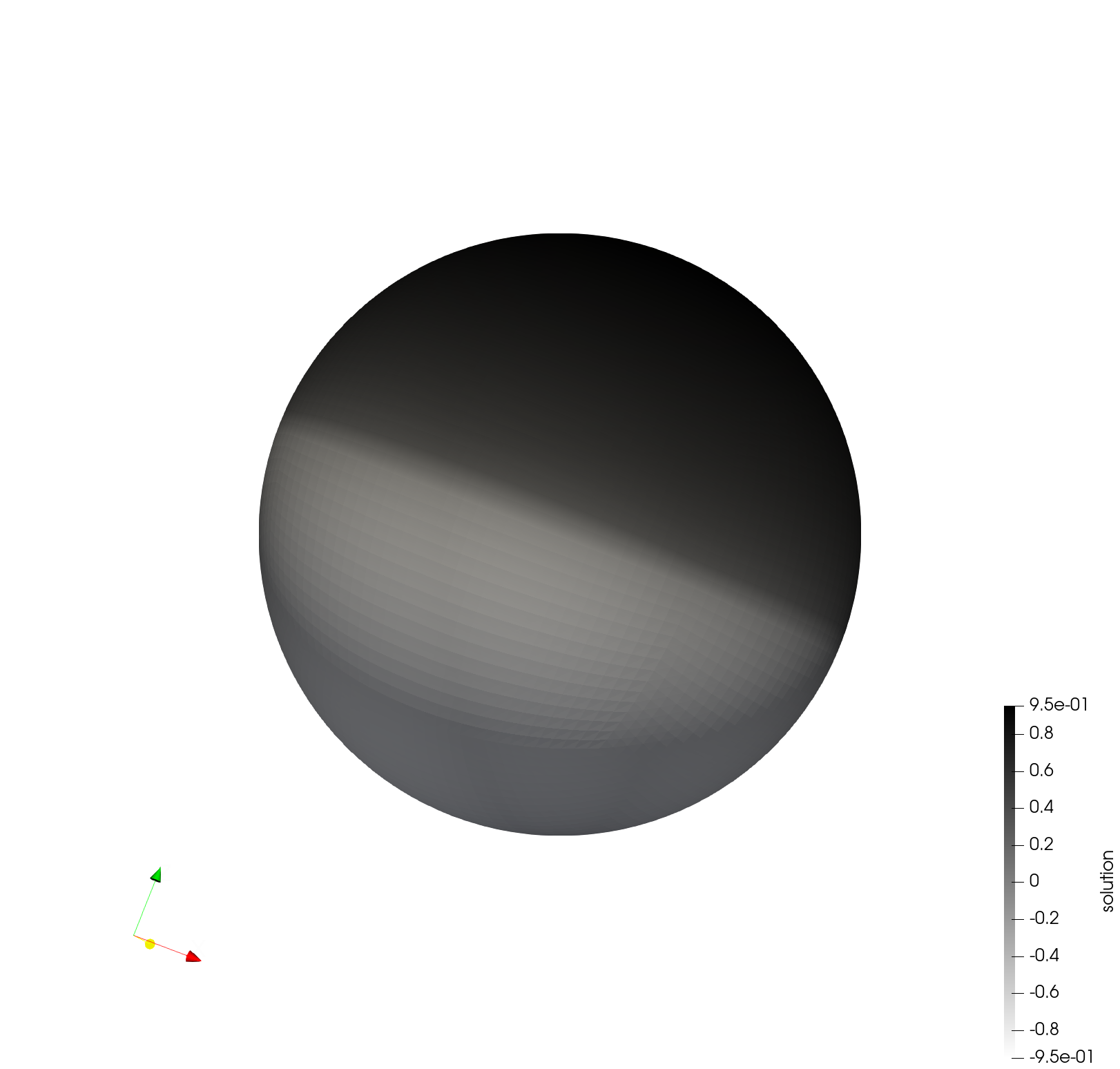} & \includegraphics[height=5cm]{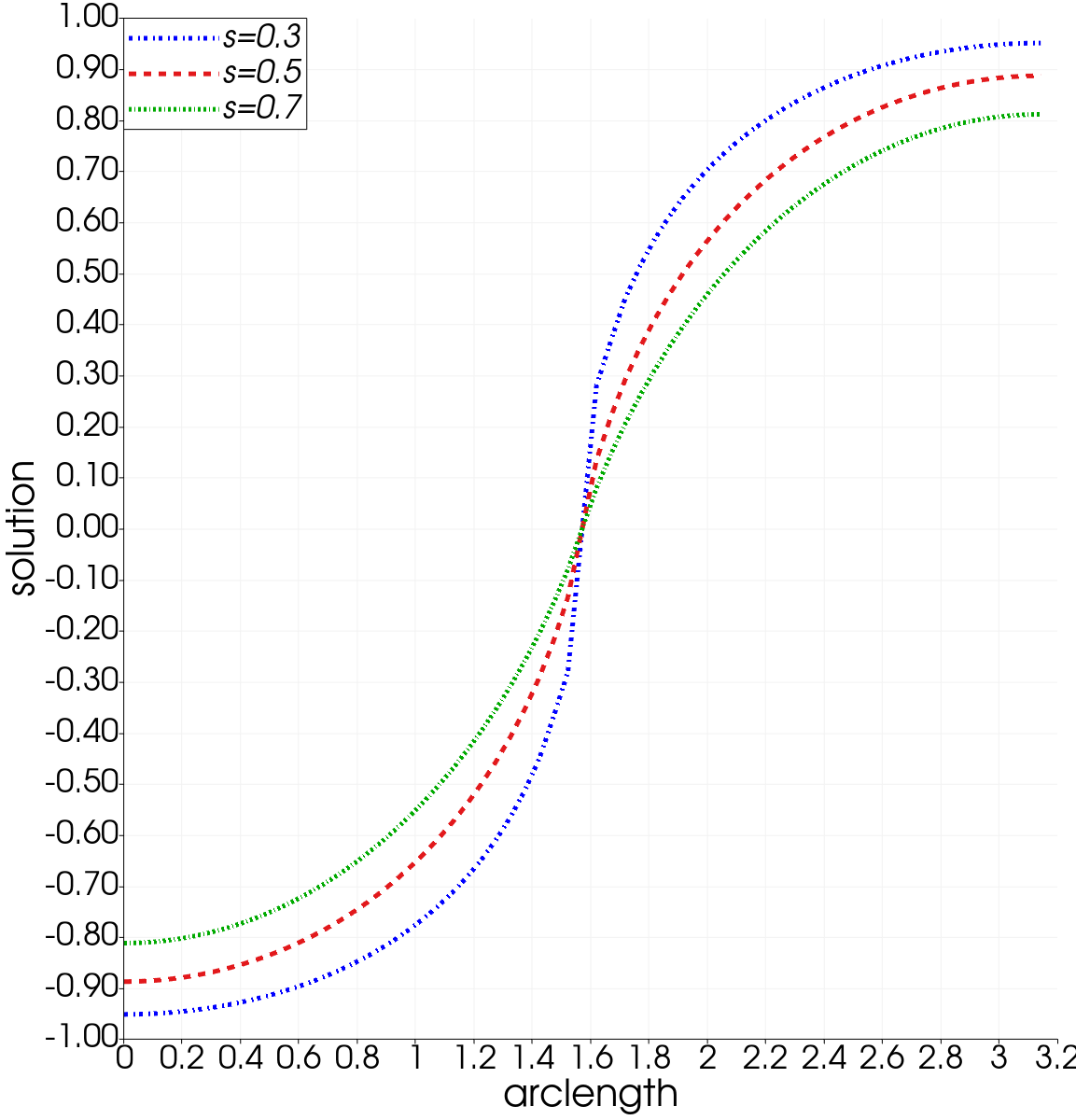} \\
		\end{tabular}
	\end{center}
	\caption{(Left) Approximated solution $U_k$ for $s=0.3$. The blue color corresponds to a value of -1 in $\widetilde f$ while the red color to 1. (Right) Values of the approximate solutions along the geodesic from the south pole to the north pole for $s=0.3$, $0.5$ and $0.7$. As $s$ increases the smoothing effect generated by the application of $L^{-s}$ increases as well.
		The visualizations are obtained using \texttt{Paraview} \cite{ayachit2015}.}
	\label{f:extract}
\end{figure}

We also consider the numerical scheme \eqref{e:sol_ukh} using a generic lifting operator $P_g$ instead of $P$ in the sub-problem \eqref{e:discrete-parametric}. To explicit $P_g$, we define for $i=1,2,3$
\[
	\mathcal D_i^+:=\{\bx \in \mathbb R^3 : x_i \ge |x_j|, j\neq i\}\text{ and }
	\mathcal D_i^-:=\{\bx \in \mathbb R^3 : x_i \le -|x_j|, j\neq i\}
\]
in such a way that $\mathcal D_i^\pm$ subdivide the unit sphere in six regions. Whence, we set $P_g\widetilde f = f\circ \mathbf P_g$, where the lift $\mathbf P_g$ is defined by the following piecewise lift onto $\gamma$: for $\bx \in \mathcal D_i^\pm$, set $\mathbf P_g^{i,\pm}(\bx) = \mathbf{z}$, where $z_j = x_j$ if $j\neq i$ and $z_i = \pm\sqrt{1-\sum_{k\neq i}x_k^2}$. Using this lifting operator to compute $U^\ell$ in \eqref{e:discrete-parametric} does not affect the convergence rate as reported in Figure~\eqref{f:convergence-generic-lift}.

\begin{figure}[hbt!]
	\begin{center}
		\begin{tabular}{cc}
			\includegraphics[scale=0.5]{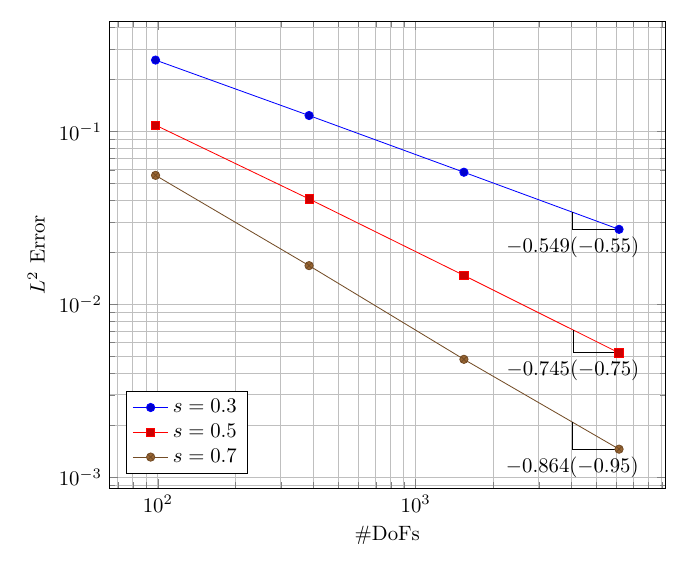} & \includegraphics[scale=0.5]{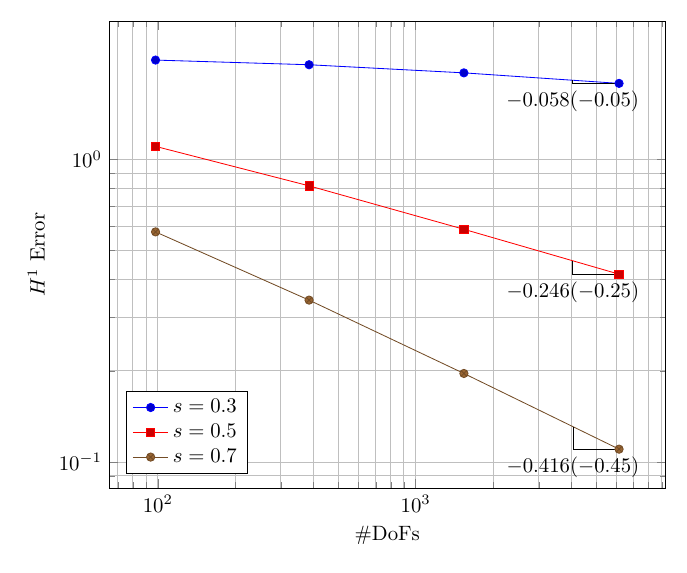}
		\end{tabular}
	\end{center}
	\caption{The errors $\| P_g\widetilde u-U_k\|_{L^2(\Gamma)}$ (left) and $\| P_g\widetilde u-U_k\|_{H^1(\Gamma)}$ (right)  versus the number of degree of freedoms for $s=0.3$, $0.5$ and $0.7$. For each error plot, the slope of the last segment together with the predicted convergence rate are also reported and matches the decay in Figure~\ref{f:convergence-parametric} (which uses the signed distance function).}
	\label{f:convergence-generic-lift}
\end{figure}

\subsubsection{Application to Gaussian Random Fields}
In this section, we discuss the numerical simulation of Gaussian random fields on a closed surface.
The latter is characterized as the solution $\widetilde u$ to the stochastic partial differential equation
\begin{equation}\label{e:spde}
	(\kappa^2 - \Delta_\gamma)^s \widetilde u = \mathcal W,\quad \text{on } \gamma,
\end{equation}
where $s>\frac4d,\kappa>0$ are regularity parameters and $\mathcal W$ denotes white noise. Note that $\mathcal W$ can be represented by the Karhumen-Lo\'eve expansion with respect to the orthonormal eigenbasis $\{\widetilde \psi_j\}_{j=1}^\infty$ in $L^2(\gamma)$, i.e.
$\mathcal W = \sum_{j=1}^\infty \xi_j\widetilde\psi_j$ with $\{\xi_j\}$ denoting a sequence of independent real-values standard normally distributed random variables. In \cite{jansson2021surface}, a surface finite element is proposed to approximate \eqref{e:spde} on the sphere with the truncated Karhumen-Lo\'eve expansion.

Here we follow the idea from \cite{bolin2020numerical} to approximate the white noise in the finite element space $\mathbb V(\mathcal T)$. Define $\mathcal W^{\Psi}:=\sum_{j=1}^N \xi_j \Psi_{j}$, where $N$ denotes the number of degrees of freedom for $\mathbb V(\mathcal T)$ and $\{\Psi_{j}\}_{j=1}^N$ is the set of eigenfunctions for the discrete Laplace-Beltrami operator. More precisely, $\Psi_{j}\in \mathbb V(\mathcal T)$ satisfies
\[
	\int_\Gamma \nabla_\Gamma \Psi_{j}\cdot\nabla_\Gamma V = \lambda_j \int_\Gamma \Psi_{j} V .
\]
To avoid the computation of discrete eigenfunctions, we can alternatively calculate $\mathcal W^{\Phi} = \sum_{j=1}\eta_j \Phi_j$. Here $\{\Phi_j\}$ are the shape functions on the vertices and $(\eta_1,\ldots,\eta_N)^T = R^{-1}(\xi_1,\ldots,\xi_N)^T$, where $R^\texttt{T} R = M$ with $M$ denoting the mass matrix with the entries $M_{ij} = \int_\Gamma \Phi_j\Phi_i$. Lemma~2.8 of \cite{bolin2020numerical} shows that $\mathbb E[\|\mathcal W^\Psi-\mathcal W^\Phi\|^2_{L^2(\Gamma)}] = 0$. In practice, the data vector for the discrete system should be $\mathbf G \mathbf z$ with $\mathbf z\sim \mathcal N(\mathbf 0,\mathbf I_{N\times N})$ with $\mathbf G$ denoting a Cholesky factor of the mass matrix.

Applying our numerical method in Section~\ref{ss:parametric}, we report a discrete solution to the problem \eqref{e:spde} in Figure~\ref{f:spde}. Here $s=0.75$, $\kappa=0.5$, and the surface $\gamma$ is a torus with the following parametrization: for $\theta,\phi\in [0,2\pi)$
\[
	\begin{aligned}
		x(\theta,\phi)  & = (2+0.5\cos \theta) \cos\phi , \\
		y(\theta, \phi) & = (2+0.5\cos\theta)\sin\phi,    \\
		z(\theta,\phi)  & = 0.5\sin\theta .
	\end{aligned}
\]
\begin{figure}[hbt!]
	\begin{center}
		\begin{tabular}{cc}
			\includegraphics[height=5.5cm]{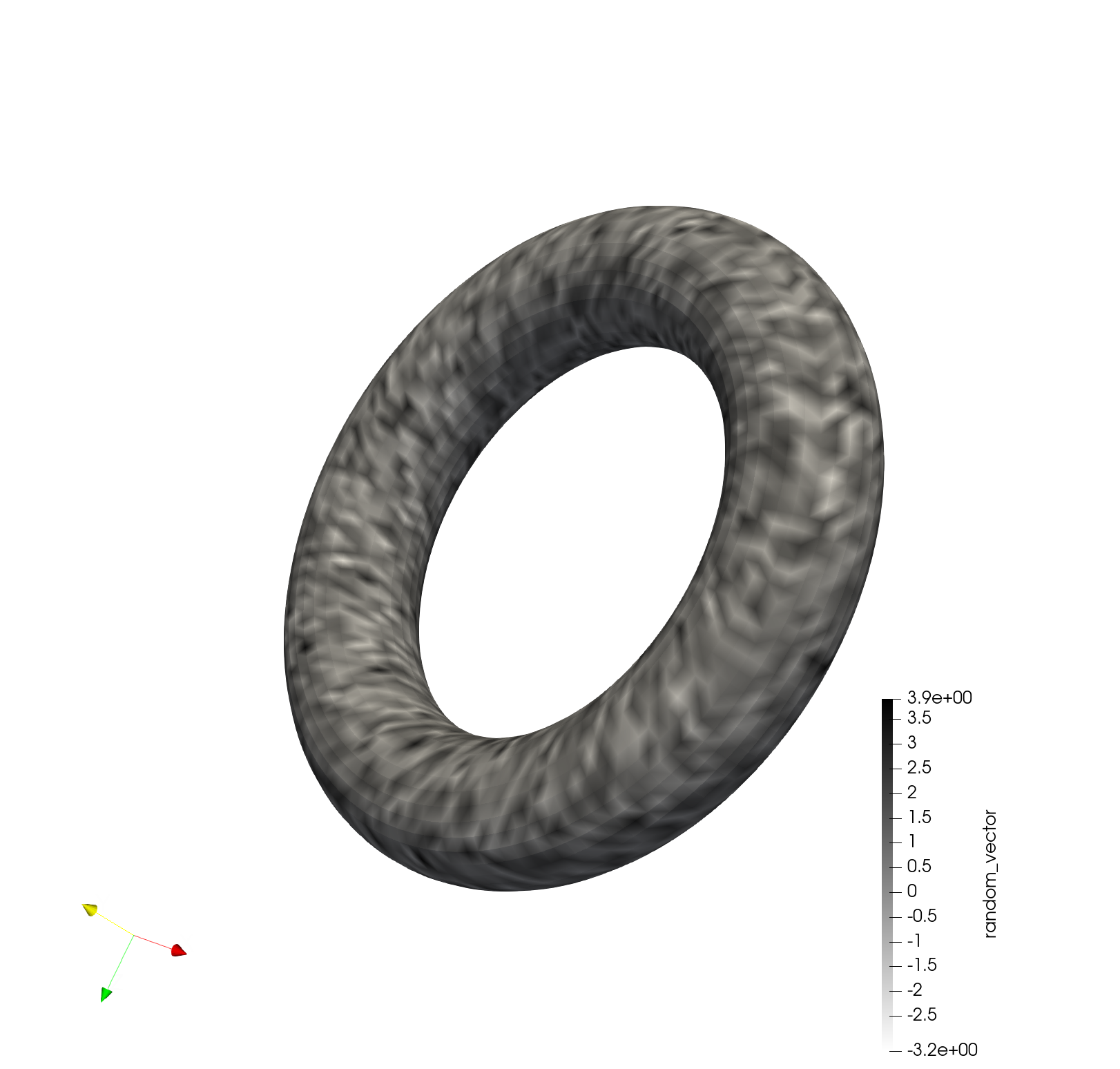} & \includegraphics[height=5.5cm]{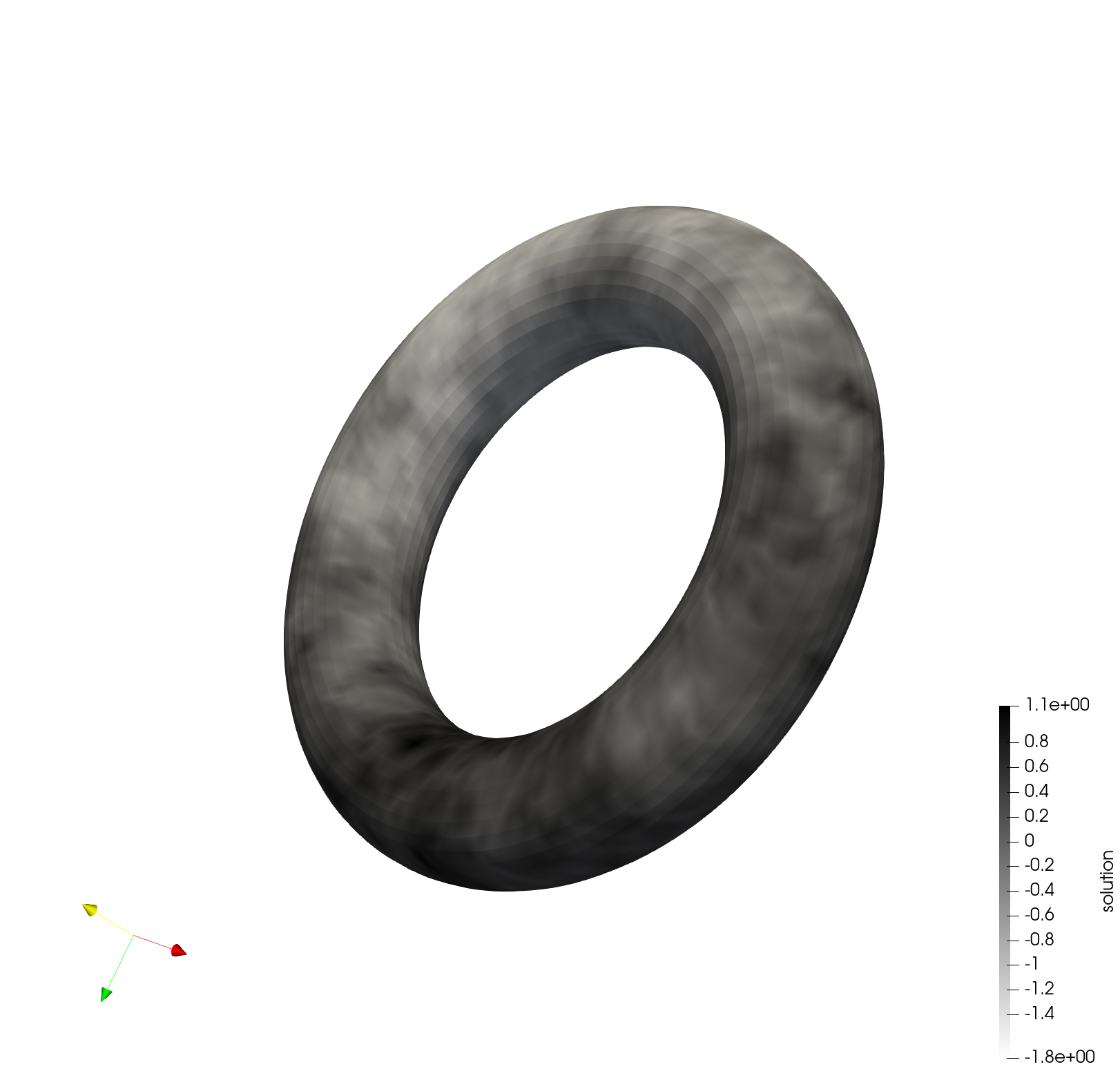} \\
		\end{tabular}
	\end{center}
	\caption{
		(Left) The random vector $\mathbf z$ and (right) the corresponding approximated random field sample $U_k$ for \eqref{e:spde} with $s=0.75$ and $\kappa=0.5$.
	}
	\label{f:spde}
\end{figure}
\section{Error Estimates}\label{s:analysis}
In this section, we provide $L^2$ and $H^1$ estimates for the discrepancy between the solution $P\widetilde u=P L^{-s}\widetilde f$ and its fully discrete approximation $U_{k}=\cQ_{k}^{-s}(L_{\mathcal T})\Ps\widetilde f$.
We discuss the error associated with the sinc quadrature first and move to the finite element error in a second step.

\subsection{Exponentially Convergent Sinc Quadrature}\label{ss:sinc}

We start by proving the stability of $\cQ^{-s}_k$.

\begin{lemma}[Stability]\label{l:sinc-stable}
	Let $2s\leq t \leq 2+2s$. Assume that $\widetilde f \in H^{t-2s}_\#(\gamma) \subset L^2_\#(\gamma)$.
	Then for any $\epsilon>0$, the sinc approximation $\widetilde u_k$ given by \eqref{e:quadrature} satisfies
	\[
		\|\widetilde u_k\|_{\dH^{t-\epsilon} (\gamma)}  \lesssim \max(1,\epsilon^{-1})\|\widetilde f\|_{H^{t-2s} (\gamma)} .
	\]
\end{lemma}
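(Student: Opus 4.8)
The plan is to diagonalize everything in the $L^2(\gamma)$-orthonormal eigenbasis $\{\widetilde\psi_j\}$ of $L$ and reduce the estimate to a scalar bound on the sinc multiplier. Since $\widetilde u^\ell=(e^{y_\ell}I+L)^{-1}\widetilde f=\sum_j (e^{y_\ell}+\widetilde\lambda_j)^{-1}(\widetilde f,\widetilde\psi_j)\widetilde\psi_j$, formula \eqref{e:quadrature} gives $\widetilde u_k=\sum_j S_j(\widetilde f,\widetilde\psi_j)\widetilde\psi_j$ with scalar multiplier
\[
	S_j:=\frac{k\sin(\pi s)}{\pi}\sum_{\ell=-M}^N\frac{e^{(1-s)y_\ell}}{e^{y_\ell}+\widetilde\lambda_j}.
\]
By the spectral definition \eqref{e:inter-def} of the dotted norm, $\|\widetilde u_k\|_{\dH^{t-\epsilon}(\gamma)}^2=\sum_j\widetilde\lambda_j^{\,t-\epsilon}S_j^2\,|(\widetilde f,\widetilde\psi_j)|^2$, so it suffices to establish the per-eigenvalue bound $S_j\lesssim \widetilde\lambda_j^{-s}$; this immediately yields $\|\widetilde u_k\|_{\dH^{t-\epsilon}}^2\lesssim\sum_j\widetilde\lambda_j^{\,t-2s-\epsilon}|(\widetilde f,\widetilde\psi_j)|^2=\|\widetilde f\|_{\dH^{t-2s-\epsilon}(\gamma)}^2$, after which the argument closes as below.

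The key step is the scalar estimate $S_j\lesssim\widetilde\lambda_j^{-s}$, uniform in $j$ and in the truncation indices $M,N$. I would obtain it by comparison with the Balakrishnan integral: after the change of variables $\mu=e^y$ the scalar identity $\frac{\sin(\pi s)}{\pi}\int_{-\infty}^\infty g_\lambda(y)\,dy=\lambda^{-s}$ holds with $g_\lambda(y):=e^{(1-s)y}/(e^y+\lambda)$, and $S_j$ is exactly $\frac{\sin(\pi s)}{\pi}$ times the Riemann sum of spacing $k$ of $g_{\widetilde\lambda_j}$. Since $g_\lambda$ is positive and unimodal, peaking at $y=\log\lambda$ with $g_\lambda(\log\lambda)=\tfrac12\lambda^{-s}$ and decaying like $e^{(1-s)y}$ as $y\to-\infty$ and like $e^{-sy}$ as $y\to+\infty$, each node value is dominated by the integral over an adjacent interval on its increasing (resp. decreasing) branch, which gives $k\sum_{\ell\in\mathbb Z}g_\lambda(\ell k)\le\int_{-\infty}^\infty g_\lambda+k\|g_\lambda\|_\infty\lesssim\lambda^{-s}$ for bounded $k$. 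Truncating to $-M\le\ell\le N$ only removes nonnegative terms, so the same bound holds for $S_j$.

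To finish, I would use $\widetilde\lambda_j\ge\widetilde\lambda_1>0$ to write $\|\widetilde f\|_{\dH^{t-2s-\epsilon}}^2=\sum_j\widetilde\lambda_j^{-\epsilon}\widetilde\lambda_j^{\,t-2s}|(\widetilde f,\widetilde\psi_j)|^2\le\max(1,\widetilde\lambda_1^{-\epsilon})\|\widetilde f\|_{\dH^{t-2s}}^2$, and then invoke the norm equivalence \eqref{e:norm_equi_dotted} between $\dH^{t-2s}(\gamma)$ and $H^{t-2s}(\gamma)$, valid since $t-2s\in[0,2]\subset[-1,2]$ under the hypothesis $2s\le t\le 2+2s$. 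This already delivers the stated estimate (in fact with the \emph{bounded} constant $\max(1,\widetilde\lambda_1^{-\epsilon})$ in place of $\max(1,\epsilon^{-1})$, so the $\epsilon$-loss in the regularity index is harmless).

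The main obstacle is the uniform-in-$\widetilde\lambda_j$ control of the truncated, non-monotone sinc sum: one must handle the peak at $y=\log\widetilde\lambda_j$ together with the two exponential tails, and check that deleting the nodes outside $[-Mk,Nk]$ cannot increase $S_j$ — the point being that, even when $\log\widetilde\lambda_j$ falls outside the quadrature window, the captured portion is still dominated by the same integral. A more operator-theoretic alternative (closer to \cite{BP15}) avoids the scalar computation by applying the triangle inequality to \eqref{e:quadrature} and bounding each $\|\widetilde u^\ell\|_{\dH^{t-\epsilon}}$ through \eqref{e:bound}; there the restriction $r\in[0,1]$ forces a split of the nodes into the regimes $e^{y_\ell}\ge\widetilde\lambda_j$ and $e^{y_\ell}<\widetilde\lambda_j$ and the insertion of the regularization $\epsilon$, and it is precisely the geometric sum $k\sum_{\ell\ge0}e^{-\epsilon y_\ell/2}\sim 2/\epsilon$ that produces the factor $\max(1,\epsilon^{-1})$.
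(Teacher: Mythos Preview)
Your argument is correct and takes a genuinely different route from the paper. The paper applies the triangle inequality to the sum \eqref{e:quadrature} \emph{before} taking the dotted norm, obtaining $\|\widetilde u_k\|_{\dH^{t-\epsilon}}\lesssim k\sum_\ell e^{(1-s)y_\ell}\|\widetilde u^\ell\|_{\dH^{t-\epsilon}}$, and then bounds each $\|\widetilde u^\ell\|_{\dH^{t-\epsilon}}$ via the pointwise inequality $\widetilde\lambda_j^{s-\epsilon/2}/(\widetilde\lambda_j+\mu_\ell)\lesssim\min(1,\mu_\ell^{s-1-\epsilon/2})$; the resulting sum $k\sum_{\ell\ge 0}e^{-\epsilon y_\ell/2}$ is exactly what produces the factor $\epsilon^{-1}$, as you anticipated in your last paragraph. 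By contrast, you keep the $\ell$-sum inside the (squared) norm, diagonalize, and bound the scalar multiplier $S_j$ directly by comparing the truncated Riemann sum of the unimodal integrand $g_\lambda(y)=e^{(1-s)y}/(e^y+\lambda)$ to its integral. This yields $S_j\lesssim\widetilde\lambda_j^{-s}$ uniformly in $j,k,M,N$, hence the sharper bound $\|\widetilde u_k\|_{\dH^t}\lesssim\|\widetilde f\|_{\dH^{t-2s}}$ with no $\epsilon$-loss at all; the stated inequality then follows with the bounded constant $\max(1,\widetilde\lambda_1^{-\epsilon/2})$ rather than $\max(1,\epsilon^{-1})$. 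One minor slip: the peak of $g_\lambda$ is at $y=\log\bigl((1-s)\lambda/s\bigr)$, not $\log\lambda$, but only the order $\|g_\lambda\|_\infty\sim\lambda^{-s}$ matters for your Riemann-sum bound, and that is correct. What the paper's approach buys is that it never needs the unimodality/Riemann-sum comparison and works term-by-term with operator estimates in the spirit of \eqref{e:bound}; what your approach buys is a cleaner constant and the observation that the $\epsilon$ in the lemma is in fact superfluous.
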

\begin{proof}
	We start with expression \eqref{e:quadrature} for $\widetilde u_k$ and estimate
	\begin{equation}\label{e:estim_ukk}
		\| \widetilde u_k\|_{\dH^{t-\epsilon} (\gamma)} \lesssim k \sum_{\ell = -M}^N e^{(1-s)y_\ell} \|\widetilde u^\ell\|_{\dH^{t-\epsilon}(\gamma)}.
	\end{equation}
	We now obtain bounds for the inner problems $\widetilde u^\ell := (\mu_\ell I + L)^{-1} \widetilde f$, where $\mu_\ell :=e^{y_\ell}$.
	Using the definition of the dotted norms \eqref{e:inter-def}, we deduce that
	\begin{equation}\label{e:uell_estim}
		\| \widetilde u^\ell  \|_{\dH^{t-\epsilon}(\gamma)}^2  =   \| (\mu_\ell I + L)^{-1} L^{(t-\epsilon)/2}\widetilde f  \|_{L^2(\gamma)}^2
		\lesssim \sum_{j=1}^\infty \widetilde \lambda_j^{t-2s}  \Big(\frac{\widetilde \lambda_j^{s-\epsilon/2}} {\widetilde \lambda_j +\mu_\ell} \Big)^{2}  |\widetilde f_j|^2,
	\end{equation}
	where  $\widetilde f_j :=  \langle \widetilde f,\widetilde \psi_j \rangle_{H^1_\#(\gamma)}$.
	Note that $ \frac{\widetilde \lambda_j^{s-\epsilon/2}} {\widetilde \lambda_j +\mu_\ell} \lesssim \min(1,\mu_\ell^{s-1-\epsilon/2})$ because
	$$
		\frac{\widetilde \lambda_j^{s-\epsilon/2}} {\widetilde \lambda_j +\mu_\ell} \leq \widetilde \lambda_j^{s-1-\epsilon/2} \lesssim 1
	$$
	and, using a Young inequality,
	$$
		\frac{\widetilde \lambda_j^{s-\epsilon/2}} {\widetilde \lambda_j +\mu_\ell} =
		\mu_\ell^{s-1-\epsilon/2}\frac{\mu_\ell^{1-s+\epsilon/2}\widetilde \lambda_j^{s-\epsilon/2}} {\widetilde \lambda_j +\mu_\ell}
		\leq \mu_\ell^{s-1-\epsilon/2}.
	$$
	As a consequence, \eqref{e:uell_estim} yields
	$$
		\| \widetilde u^\ell  \|_{\dot H^{t-\epsilon}(\gamma)}^2 \lesssim  \min(1,\mu_\ell^{s-1-\epsilon/2})  \| \widetilde f  \|_{\dot H^{t-2s}(\gamma)}^2 \lesssim \min(1,\mu_\ell^{s-1-\epsilon/2})  \| \widetilde f  \|_{H^{t-2s}(\gamma)}^2,
	$$
	where for the last estimate we used the equivalence of norms \eqref{e:norm_equi_dotted}.
	Returning to \eqref{e:estim_ukk}, we obtain the desired estimate
	\[
		\begin{split}
			\|\widetilde{u}_k\|_{\dH^{t-\epsilon}(\gamma)} &\lesssim \|\widetilde f\|_{H^{t-2s}(\gamma)} k\bigg(\sum_{\ell = -M}^1 e^{(1-s)y_\ell} + \sum_{\ell = 0}^N e^{-\epsilon y_\ell/2} \bigg)  \\
			&\lesssim \max(1,\epsilon^{-1}) \|\widetilde f\|_{H^{t-2s}(\gamma)}.
		\end{split}
	\]
\end{proof}
We remark that except for the factor $\epsilon$, the above stability is expected since $\widetilde u := (-\Delta_\gamma)^{-s} \widetilde f \in \dH^t(\gamma)$ for $\widetilde f \in H_\#^{t-2s}(\gamma)$ and thus it reads
$$
	\| \widetilde u_k \|_{\dH^{t-\epsilon}(\gamma)} \lesssim \max(1,\epsilon^{-1}) \| \widetilde u \|_{\dH^t(\gamma)}.
$$
In addition, when $t\leq 2$ the dotted spaces can be replaced by the regular interpolation spaces.
However, the  analysis below requires such estimate for $t > 2$.

Regarding the convergence of $\cQ^{-s}_k(L)$ towards $L^{-s}$, it follows from Theorem 3.2 and Remark 3.1 in \cite{BLP17}:
Given $\widetilde f\in \HHs^{2t}(\gamma)$ with $t\in [0,1]$ and $-t\le r<s$, there holds
\begin{equation}\label{e:sinc-error}
	\|(L^{-s}-\cQ^{-s}_k(L))\widetilde f\|_{\HH^{2(r+t)}(\gamma)}  \lesssim \rho(k,r,t)\|\widetilde f\|_{\HH^{2t}(\gamma)} ,
\end{equation}
where
\begin{equation}\label{e:rho}
	\rho(k,r,t) := \frac{e^{-\pi^2/(2k)}}{\sinh{(\pi^2/(2k))}} + e^{-(s-r^+)Nk} + e^{-(1-s)Mk}, \qquad r^+ := \max(0,r) .
\end{equation}
Balancing three terms in the definition of $\rho(k,r,t)$ yields
	\[
		\rho(k,r,t)  \sim C\bigg(\frac{1}{s-r^+}+\frac{1}{1-s}\bigg) e^{-\pi^2/(2k)} ,\quad\text{ as } k\to 0 ,
	\]
	where the constant $C$ is independent of $k$ and $s$; see Remark~3.3 in \cite{BLP17}.

\subsection{Space Discretization}\label{ss:fem}

The next theorem assesses the discrepancy between the sinc quadrature approximation (lifted to $\Gamma$) $P\widetilde u_k=P \cQ_k^{-s}(L)\widetilde f$  and the fully discrete approximation $U_{k} \in \vh$.
Its proof relies on several intermediate results and is therefore postponed to Section~\ref{ss:proof_main}.

\begin{theorem}[space discretization]\label{t:rate}
	Let $0\le\delta\le 1$. Assume that $\widetilde f\in \HHs^{2\delta}(\gamma)$.
	Let $\widetilde u_k = \cQ_k^{-s}(L)\widetilde f$ be the sinc quadrature approximation \eqref{e:quadrature} of $U_{k} \in \vh$ in turn given by \eqref{e:sol_ukh}.  Assume that $h$ is sufficiently small so that $\Gamma \subset \mathcal N$ and $h\leq e^{-1}$. Then there holds
	$$
		\|P \widetilde u_k - U_{k}\|_{L^2(\Gamma)} \lesssim \varepsilon(h)\|\widetilde f\|_{\HH^{2\delta}(\gamma)},
	$$
	where
	\begin{equation}\label{e:eps}
		\varepsilon(h) := \left\{
		\begin{aligned}
			 & h^2,                          & \quad \text{if } \delta+s > 1,  \\
			 & \ln(h^{-1})h^{2(\delta + s)}, & \quad\text{if } \delta+s \le 1. \\
		\end{aligned}
		\right .
	\end{equation}
	If in addition $\delta + s>\tfrac12$, then we have
	$$
		\|P \widetilde u_k - U_{k}\|_{H^1(\Gamma)}
		\lesssim \varepsilon(h)h^{-1}\|\widetilde f\|_{\HH^{2\delta}(\gamma)}.
	$$
\end{theorem}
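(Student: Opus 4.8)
The plan is to use the linearity of the quadrature: both $P\widetilde u_k$ and $U_{k}$ are the \emph{same} linear combination, with weights $\tfrac{k\sin(\pi s)}{\pi}e^{(1-s)y_\ell}$, of the inner solutions $P\widetilde u^\ell$ and $U^\ell$. A triangle inequality therefore reduces the theorem to a single--mode estimate,
\[
\|P\widetilde u_k-U_{k}\|_{L^2(\Gamma)}\le\frac{k\sin(\pi s)}{\pi}\sum_{\ell=-M}^N e^{(1-s)y_\ell}\,\|P\widetilde u^\ell-U^\ell\|_{L^2(\Gamma)},
\]
and likewise in $H^1(\Gamma)$. Everything then rests on a bound for the parametric finite element error of the single shifted reaction--diffusion problem $(\mu_\ell I-\Delta_\gamma)\widetilde u^\ell=\widetilde f$, $\mu_\ell:=e^{y_\ell}$, that is explicit in $\mu_\ell$ and incorporates the variational crime caused by replacing $\gamma,\widetilde f$ with $\Gamma,P_\#\widetilde f$.

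The core intermediate result I would prove is a two-regime per-mode estimate,
\[
\|P\widetilde u^\ell-U^\ell\|_{L^2(\Gamma)}\lesssim
\begin{cases}
h^2\,\mu_\ell^{-\delta}\,\|\widetilde f\|_{H^{2\delta}(\gamma)}, & \mu_\ell\le h^{-2},\\
\bigl(\mu_\ell^{-\delta-1}+h^{2\delta}\mu_\ell^{-1}\bigr)\,\|\widetilde f\|_{H^{2\delta}(\gamma)}, & \mu_\ell> h^{-2}.
\end{cases}
\]
For $\mu_\ell\le h^{-2}$ this is a C\'ea/Aubin--Nitsche argument for the coercive form $\mu_\ell(\cdot,\cdot)+a_\gamma(\cdot,\cdot)$, where the two gained derivatives come with a $\mu_\ell$-decaying constant: \eqref{e:bound} with $r=0$, $t=2\delta$ gives $\|\widetilde u^\ell\|_{H^2(\gamma)}\lesssim\mu_\ell^{-\delta}\|\widetilde f\|_{H^{2\delta}(\gamma)}$, and \eqref{i:bound1} controls the lower order terms. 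For $\mu_\ell> h^{-2}$ the problem is reaction dominated, $\widetilde u^\ell\approx\mu_\ell^{-1}\widetilde f$, so the error is governed by the solution size $\|\widetilde u^\ell\|_{L^2(\gamma)}\lesssim\mu_\ell^{\delta-1}\|\widetilde f\|_{H^{2\delta}(\gamma)}$ (from the spectral representation) and by the $L^2$ data-interpolation error $\mu_\ell^{-1}\|\widetilde f-\Pi_h\widetilde f\|_{L^2}\lesssim h^{2\delta}\mu_\ell^{-1}\|\widetilde f\|_{H^{2\delta}(\gamma)}$. In both regimes the geometric consistency terms (the area ratio $\sigma$ and the distortion of $\nabla_\Gamma$ relative to $\nabla_\gamma$) are $O(h^2)$ because of the signed-distance lift \eqref{e:orthogonal}; crucially, the potentially dangerous $\mu_\ell$ multiplying the mass term is tamed by $\mu_\ell\|\widetilde u^\ell\|_{L^2(\gamma)}\lesssim\|\widetilde f\|_{L^2(\gamma)}$, so consistency does not degrade as $\mu_\ell\to\infty$.

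It then remains to sum. Interpreting the weighted sum as a Riemann sum, $k\sum_\ell e^{(1-s)y_\ell}g(\mu_\ell)\approx\int_0^\infty\mu^{-s}g(\mu)\,d\mu$, and splitting at the resolution threshold $\mu\sim h^{-2}$, the small-$\mu$ regime contributes $h^2\int_1^{h^{-2}}\mu^{-s-\delta}\,d\mu$ (plus a harmless $O(h^2)$ from $\mu\le1$): this is $O(h^2)$ when $\delta+s>1$, whereas for $\delta+s\le1$ the uniform-in-$\delta$ bound $\mu^{-s-\delta}\le\mu^{-1}(h^{-2})^{1-s-\delta}$ produces $\ln(h^{-1})\,h^{2(\delta+s)}$, which is precisely the logarithm and the rate switch in \eqref{e:eps}. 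The large-$\mu$ regime integrates $\mu^{-s}(\mu^{-\delta-1}+h^{2\delta}\mu^{-1})$ over $[h^{-2},\infty)$ to $O(h^{2(\delta+s)})$, matching. The $H^1(\Gamma)$ statement follows identically, starting instead from the energy/gradient per-mode estimate, which carries one fewer power of $h$ (e.g.\ $h\,\mu_\ell^{-\delta}$ for $\mu_\ell\le h^{-2}$); this is the source of the overall factor $h^{-1}$, and the hypothesis $\delta+s>\tfrac12$ guarantees that the target $\widetilde u=L^{-s}\widetilde f\in H^{2(\delta+s)}_\#(\gamma)$ is genuinely $H^1$ and that the $H^1$ sum converges.

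The step I expect to be the main obstacle is the $\mu_\ell$-uniform finite element analysis of the shifted problem on the approximate surface. One must simultaneously obtain the \emph{sharp} powers of $\mu_\ell$ and $h$ in the large-$\mu_\ell$ reaction-dominated regime (the naive solution-size bound $\mu_\ell^{\delta-1}$ alone is too crude and would spoil the rate) \emph{and} control the variational crime from $\Gamma\neq\gamma$ and $P_\#\widetilde f\neq\widetilde f$ so that these geometric terms stay $O(h^2)$ and are not amplified by the exponentially large reaction coefficients $\mu_\ell=e^{y_\ell}$ with $y_N\sim k^{-1}$. Making the crossover at $\mu\sim h^{-2}$ reproduce exactly $\varepsilon(h)$, logarithm included, is the delicate quantitative heart of the argument.
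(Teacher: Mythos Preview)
Your overall architecture—triangle inequality on the quadrature, a per-mode bound for $\|P\widetilde u^\ell-U^\ell\|$, and a range-split summation—is indeed the paper's. The low/middle-$\mu$ bound $h^2\mu_\ell^{-\delta}$ is also the paper's (it is the case $\alpha=1$ of Lemma~\ref{l:res}), although the paper obtains it not by a direct C\'ea/Aubin--Nitsche on the shifted surface problem but via the operator ``sandwich'' identity \eqref{e:error_representation_B}, which factors the per-mode error through the \emph{unshifted} Laplace--Beltrami error $\pi_{\mathcal T}PT-T_{\mathcal T}\pi_{\mathcal T}P_\#$ (estimated in intermediate scales in Proposition~\ref{p:approximation}) flanked by continuous and discrete resolvents.

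There is, however, a genuine gap in your large-$\mu$ regime when $\delta>\tfrac12$. Your claimed bound $\mu_\ell^{-\delta-1}$ would, after splitting $P\widetilde u^\ell-U^\ell$ around $\mu_\ell^{-1}\widetilde f$, require the \emph{discrete} remainder
\[
U^\ell-\mu_\ell^{-1}\pi_{\mathcal T}P_\#\widetilde f=-\mu_\ell^{-1}L_{\mathcal T}(\mu_\ell I+L_{\mathcal T})^{-1}\pi_{\mathcal T}P_\#\widetilde f
\]
to decay like $\mu_\ell^{-1-\delta}\|\widetilde f\|_{H^{2\delta}}$. But the discrete resolvent bound \eqref{e:bound-discrete} only yields $\|L_{\mathcal T}(\mu I+L_{\mathcal T})^{-1}G\|_{L^2}\lesssim\mu^{-t/2}\|G\|_{H^t}$ for $t\le1$; piecewise linears cannot see $H^{2\delta}$ regularity for $\delta>\tfrac12$, so the best you get is $\mu_\ell^{-3/2}$. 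With your crossover at $h^{-2}$ this sums to $h^{2s+1}$, which for $s<\tfrac12$ and $\tfrac12<\delta\le 1-s$ (resp.\ $\delta+s>1$) falls short of the target $\ln(h^{-1})h^{2(\delta+s)}$ (resp.\ $h^2$). The ``solution-size'' heuristic you invoke does not produce $\mu^{-\delta-1}$ either; it gives only $\mu^{-1}$.

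The paper circumvents this by never asking the discrete resolvent to carry the extra $\delta$: the sandwich \eqref{e:error_representation_B} puts all the data regularity on the \emph{continuous} resolvent via \eqref{e:bound}, and Proposition~\ref{p:approximation} supplies the $h^{2\alpha}$ factor with a free parameter $\alpha\in[0,1]$. Choosing $\alpha=\alpha^*:=\delta+s-\epsilon'$ gives the per-mode bound $\mu_\ell^{-1-\epsilon'}h^{2\alpha^*}$, whose sum over $\mu_\ell\ge1$ converges to $h^{2\alpha^*}/\epsilon'$; the crossover is then taken at $h^{-2\alpha^*/s}$ (not $h^{-2}$), beyond which the crude $\mu_\ell^{-1}$ bound already sums to $h^{2\alpha^*}$. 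Setting $\epsilon'\sim 1/\ln(h^{-1})$ yields the logarithm. In short, the missing ingredient in your plan is the $\alpha$-parameterized intermediate-norm estimate for the unshifted problem and the $\delta$-dependent crossover; a fixed split at $\mu=h^{-2}$ together with the reaction-dominated heuristic is not enough when $\delta>\tfrac12$.
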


When combing the error estimates from the sinc approximation and space discretization, we obtain the following corollary for the total error.
\begin{corollary}[total error]\label{c:rate}
	Let $0\le\delta\le 1$. Assume that $\widetilde f\in \HHs^{2\delta}(\gamma)$.
	Let $u = L^{-s}\widetilde f$ and $U_{k} \in \vh$ its approximation given by \eqref{e:sol_ukh}.
	Assume that $h$ is sufficiently small so that $\Gamma \subset \mathcal N$ and $h\leq e^{-1}$. Then there holds
	$$
		\|P \widetilde u - U_{k}\|_{L^2(\Gamma)} \lesssim (\varepsilon(h)+ \rho(k,1-\delta,\delta))\|\widetilde f\|_{\HH^{2\delta}(\gamma)},
	$$
	where $\varepsilon(h)$ is given by \eqref{e:eps} and $\rho(k,1-\delta,\delta)$ by \eqref{e:rho}.
	If in addition $\delta + s>\tfrac12$ then we have
	$$
		\|P \widetilde u - U_{k}\|_{H^1(\Gamma)}
		\lesssim (\varepsilon(h)h^{-1} + \rho(k,\tfrac12-\delta,\delta))\|\widetilde f\|_{\HH^{2\delta}(\gamma)}.
	$$
\end{corollary}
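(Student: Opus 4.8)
The plan is to combine the exponentially convergent sinc estimate \eqref{e:sinc-error} with the space discretization estimate of Theorem~\ref{t:rate} through a triangle inequality pivoting on the lifted sinc approximation $P\widetilde u_k$. Writing
\[
	\|P\widetilde u - U_{k}\|_{L^2(\Gamma)} \le \|P(\widetilde u - \widetilde u_k)\|_{L^2(\Gamma)} + \|P\widetilde u_k - U_{k}\|_{L^2(\Gamma)},
\]
the second term is precisely the quantity controlled by Theorem~\ref{t:rate}, contributing $\varepsilon(h)\|\widetilde f\|_{\HH^{2\delta}(\gamma)}$ in $L^2$ and $\varepsilon(h)h^{-1}\|\widetilde f\|_{\HH^{2\delta}(\gamma)}$ in $H^1$. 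It remains to estimate the first term, which is the genuine quadrature error $\widetilde u - \widetilde u_k = (L^{-s} - \cQ_k^{-s}(L))\widetilde f$ transported to $\Gamma$ by the lift $P$.

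First I would dispose of the lift. Since $\bP:\Gamma\to\gamma$ is a $C^2$ diffeomorphism with area ratio $\sigma$ bounded above and below uniformly in $h$ (guaranteed once $h$ is small enough that $\Gamma\subset\mathcal N$), the operator $P$ is bounded from $L^2(\gamma)$ into $L^2(\Gamma)$ and from $H^1(\gamma)$ into $H^1(\Gamma)$ with constants independent of $h$; this is the standard lift property underpinning the parametric finite element analysis. Hence $\|Pw\|_{L^2(\Gamma)}\lesssim\|w\|_{L^2(\gamma)}$ and $\|Pw\|_{H^1(\Gamma)}\lesssim\|w\|_{H^1(\gamma)}$, and applying these with $w=\widetilde u-\widetilde u_k$ reduces the first term to a norm on the exact surface $\gamma$.

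Next I would invoke the sinc error bound \eqref{e:sinc-error} with $t=\delta$, so that the right-hand side reads $\|\widetilde f\|_{\HH^{2\delta}(\gamma)}$, choosing the order parameter $r$ so that the target space $\HH^{2(r+t)}(\gamma)$ matches the norm at hand. For the $H^1$ estimate the target is $\HH^1(\gamma)$, forcing $r=\tfrac12-\delta$; the admissibility $-t\le r<s$ in \eqref{e:rho} then reduces to $\tfrac12-\delta<s$, i.e. exactly the standing hypothesis $\delta+s>\tfrac12$, which secures $r^+<s$ and hence $\rho(k,\tfrac12-\delta,\delta)\lesssim e^{-c/k}$. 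For the $L^2$ estimate the target is $L^2(\gamma)$, giving $r=-\delta$ (with $r^+=0<s$ for all admissible $\delta,s$); since $\rho$ is non-decreasing in $r^+$, the resulting factor $\rho(k,-\delta,\delta)$ is majorized by the stated $\rho(k,1-\delta,\delta)$. Summing the two contributions in each case yields the asserted bounds, with no regularity of $\widetilde f$ beyond $\HHs^{2\delta}(\gamma)$ required, as both factors act on $\|\widetilde f\|_{\HH^{2\delta}(\gamma)}$.

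The step demanding the most care is the uniform-in-$h$ boundedness of the lift $P$ between the Sobolev spaces on $\gamma$ and on the polyhedral surface $\Gamma$, together with the routine check that the parameter selections respect the hypotheses $t\in[0,1]$ and $-t\le r<s$ of \eqref{e:sinc-error}; once these are in hand the corollary is a mechanical assembly of Theorem~\ref{t:rate} and the quadrature estimate.
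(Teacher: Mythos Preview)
Your proposal is correct and follows essentially the same route as the paper: the triangle inequality splitting at $P\widetilde u_k$, Theorem~\ref{t:rate} for the space discretization term, the norm equivalence \eqref{i:norm-equivalency-ba} to transfer the quadrature error from $\Gamma$ to $\gamma$, and \eqref{e:sinc-error} with $(t,r)=(\delta,-\delta)$ for $L^2$ and $(t,r)=(\delta,\tfrac12-\delta)$ for $H^1$. Your observation that $\rho(k,-\delta,\delta)\le\rho(k,1-\delta,\delta)$ (by monotonicity in $r^+$) and your explicit check that the hypothesis $\delta+s>\tfrac12$ is exactly the admissibility condition $r^+<s$ for the $H^1$ choice make the argument slightly more explicit than the paper's own account.
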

The proof of this corollary is also postponed to Section~\ref{ss:proof_main}.

\subsubsection{$L^2(\Gamma)$-orthogonal projection}\label{ss:l2}
The $L^2(\Gamma)$ orthogonal projection $\pi_{\mathcal T}:L^2(\Gamma)\rightarrow \vh$ onto $\vh$ will be instrumental in the analysis below.
For $v\in L^2(\Gamma)$, it is defined by the relations
\begin{equation}\label{e:L2proj}
	\int_{\Gamma} (\pi_{\mathcal T} v -v) W = 0, \qquad \forall W \in \vh
\end{equation}
and have similar properties as the standard $L^2$ projection on Euclidean domain.
We summarize the properties needed for the analysis in the following proposition.



\begin{proposition}\label{p:l2-proj-approx}
	The $L^2$ projection $\pi_{\mathcal T}$ is both $L^2$ and $H^1$ stable and in particular for $v\in H^r(\Gamma)$, $r\in[0,1]$, there holds
	\begin{equation}\label{e:stabl2}
		\|\pi_{\mathcal T} v\|_{H^r(\Gamma)}\lesssim \|v\|_{H^r(\Gamma)}.
	\end{equation}
	Consequently, for $0\leq r,t \leq 1$ and $\widetilde v\in\HH^{r+t}(\gamma)$ we have
	\begin{equation}\label{i:l2-proj-approx}
		\|(I-\pi_{\mathcal T})P\widetilde v\|_{H^t(\Gamma)}  \lesssim h^{r}\|\widetilde v\|_{\HH^{r+t}(\gamma)}.
	\end{equation}

\end{proposition}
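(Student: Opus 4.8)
The plan is to establish the stability bound \eqref{e:stabl2} first and then read off the approximation estimate \eqref{i:l2-proj-approx} from it, combined with a standard finite element approximation property and the norm equivalence induced by the lift $\mathbf P$. The case $r=0$ of \eqref{e:stabl2} is immediate, since $\pi_{\mathcal T}$ is an $L^2(\Gamma)$-orthogonal projection and hence a contraction. For $r=1$ I would run the classical argument: let $I_h v \in \vh$ be a Scott--Zhang type quasi-interpolant, which is $H^1$-stable and satisfies $\|v - I_h v\|_{L^2(\Gamma)} \lesssim h\|v\|_{H^1(\Gamma)}$ with constants depending only on $c_J$, $c_q$ and $c_v$. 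Splitting $\nabla_\Gamma \pi_{\mathcal T} v = \nabla_\Gamma(\pi_{\mathcal T} v - I_h v) + \nabla_\Gamma I_h v$, the element-wise inverse inequality gives $\|\nabla_\Gamma(\pi_{\mathcal T} v - I_h v)\|_{L^2(\Gamma)} \lesssim h^{-1}\|\pi_{\mathcal T} v - I_h v\|_{L^2(\Gamma)}$, while the best-approximation property of $\pi_{\mathcal T}$ yields $\|\pi_{\mathcal T} v - I_h v\|_{L^2(\Gamma)} \le \|v - \pi_{\mathcal T} v\|_{L^2(\Gamma)} + \|v - I_h v\|_{L^2(\Gamma)} \lesssim h\|v\|_{H^1(\Gamma)}$, so the first term is bounded by $\|v\|_{H^1(\Gamma)}$; the second term is controlled by the $H^1$-stability of $I_h$. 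Together with the $r=0$ bound this gives \eqref{e:stabl2} for $r=1$, and the intermediate range $r\in(0,1)$ follows by real interpolation, since $\pi_{\mathcal T}$ is a single linear operator bounded on both endpoints $L^2(\Gamma)$ and $H^1(\Gamma)$.

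For the approximation estimate I would exploit that $\pi_{\mathcal T}$ reproduces $\vh$, so that $(I-\pi_{\mathcal T})P\widetilde v = (I-\pi_{\mathcal T})(P\widetilde v - W)$ for every $W \in \vh$. The $H^t$-stability just established (valid for $t\in[0,1]$) applied to $I-\pi_{\mathcal T}$ then gives $\|(I-\pi_{\mathcal T})P\widetilde v\|_{H^t(\Gamma)} \lesssim \inf_{W\in\vh}\|P\widetilde v - W\|_{H^t(\Gamma)}$. Taking $W$ to be a quasi-interpolant of $P\widetilde v$ and invoking the standard approximation estimate $\|P\widetilde v - W\|_{H^t(\Gamma)} \lesssim h^{r}\|P\widetilde v\|_{H^{r+t}(\Gamma)}$ (first established for the integer pairs $t\in\{0,1\}$, $r+t\in\{0,1,2\}$ and extended to fractional orders by interpolation in both $r$ and $t$) reduces the matter to bounding $\|P\widetilde v\|_{H^{r+t}(\Gamma)}$. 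Finally, since $\gamma$ is $C^3$ and $\Gamma \subset \mathcal N$, the projection $\mathbf P:\Gamma\to\gamma$ is a $C^2$ diffeomorphism, and a change of variables gives the lift norm equivalence $\|P\widetilde v\|_{H^\sigma(\Gamma)} \sim \|\widetilde v\|_{H^\sigma(\gamma)}$ for $0\le\sigma\le 2$; applying this with $\sigma=r+t\le 2$ and chaining the three bounds yields \eqref{i:l2-proj-approx}.

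The main obstacle is bookkeeping the constants so that they depend only on the shape parameters $c_J$, $c_q$, $c_v$ and not on $h$: the inverse inequality and the stability and approximation properties of the quasi-interpolant rely on the quasi-uniformity encoded by $c_q$ together with the bounded valence $c_v$, the latter being essential on a polyhedral surface where arbitrarily many faces could otherwise meet at a single vertex. A second delicate point is the uniform validity of the lift norm equivalence up to order two, which is precisely where the $C^2$ regularity of $\mathbf P$ (hence the $C^3$ regularity of $\gamma$) enters through the curvature bounds defining the tubular neighborhood $\mathcal N$; these guarantee that the area ratio $\sigma$ and its first two derivatives stay bounded away from zero and infinity uniformly in $h$. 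Both points are standard in the parametric finite element literature and can be imported from the references already cited.
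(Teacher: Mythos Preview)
Your proposal is correct and follows essentially the same route as the paper: $L^2$ stability is trivial, $H^1$ stability is obtained from a Scott--Zhang type quasi-interpolant together with an inverse inequality (the paper phrases this as the Bramble--Xu trick, which in addition invokes the cell-wise $L^2$ projection, but the two variants are standard and interchangeable here), fractional $r$ follows by interpolation, and \eqref{i:l2-proj-approx} then comes from stability plus approximation of the quasi-interpolant and the lift norm equivalence \eqref{i:norm-equivalency-ba}. Your remarks on the dependence of the constants on $c_J$, $c_q$, $c_v$ and on the role of the $C^3$ assumption for the lift are also exactly the points the paper flags.
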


The proof of the above results does not differ from the standard Euclidean case.
In fact, the $L^2$ stability and error estimates directly follows from the definition of the $L^2$ projection and the approximation properties of the Scott-Zhang interpolant \cite{scott1990finite}. The $H^1$ stability and error  estimates can be derived using the Bramble-Xu trick \cite{BX91}, which also require the approximation properties of the Scott-Zhang interpolant along with those of the cell-wise $L^2$ projection.
Interpolating the stability results between $H^1$ and $L^2$ yield \eqref{e:stabl2} while
interpolating the approximation results between $H^1$ and $L^2$ yield \eqref{i:l2-proj-approx}.
It is worth mentioning that the constant hidden in the $``\lesssim''$ signs depends on the quasi-uniformity constant $c_q$, the Jacobian constant $c_J$ and the maximum valence constant $c_v$ all defined in Section~\ref{ss:parametric}. The details are omitted for brevity.

\subsection{Operator Representation of the Discrete Approximations}

As in  Section~\ref{ss:LB}, we rewrite \eqref{e:sol_ukh} in operator form by defining the discrete counter-parts of $T$ and $L$.
We denote by $T_{\mathcal T} : \vh \cap L^2_\#(\Gamma) \to \vh \cap L^2_\#(\Gamma)$ the discrete solution operator defined by $T_{\mathcal T} G:= W$, where $W \in \vh \cap L^2_\#(\Gamma)$ is the unique solution (again guaranteed by Lax-Milgram) of
\begin{equation}\label{e:discrete-problem}
	a_{\Gamma}(W,V):=\int_{\Gamma} \nabla_{\Gamma} W \cdot \nabla_{\Gamma}V= \int_{\Gamma} G V ,\qquad \Forall V\in\vh.
\end{equation}
Its inverse on $\vh\cap L^2_\#(\Gamma)$ is denoted $L_{\mathcal T}$, i.e. $L_{\mathcal T}:=T_{\mathcal T}^{-1}:\vh\cap L^2_\#(\Gamma) \rightarrow \vh\cap L^2_\#(\Gamma)$. Note that $L_{\mathcal T}$ is invertible on $\vh \cap L^2_\#(\Gamma)$ but it will be convenient for the analysis below to extend its action on the whole $\vh$ ($L_{\mathcal T} 1 =0$).
Regardless, for $\widetilde f \in L^2_\#(\gamma)$, the solution $U_k \in \vh \cap L^2_\#(\Gamma)$ given by \eqref{e:sol_ukh} satisfies
\begin{equation}\label{e:dfrac}
	U_k = \cQ_k^{-s} (L_{\mathcal T}) \Ps \widetilde f := \frac{k\sin(\pi s)}{\pi}\sum_{\ell=-M}^N  e^{(1-s) y_\ell} (e^{y_\ell}I+  L_{\mathcal T})^{-1} \pi_{\mathcal T} \Ps \widetilde f,
\end{equation}
where $\pi_{\mathcal T}$ is the $L^2(\Gamma)$ orthogonal projection onto $\mathbb V(\mathcal T)$ (see Section~\ref{ss:l2}) and
$$
	\Ps:L^2_\#(\gamma) \rightarrow L^2_\#(\Gamma), \quad \textrm{is given by }\Ps \widetilde g = \sigma P \widetilde g.
$$
In particular, we observe that $\pi_{\mathcal T}\Ps\widetilde f \in \vh \cap L^2_\#(\Gamma)$ when $\widetilde f \in L^2_\#(\gamma)$.

Next we shall collect some instrumental estimates corresponding to discrete versions of \eqref{i:bound1} and \eqref{e:bound}.
We gather both results in the next lemma.
\begin{lemma}\label{l:bound-discrete}
	For $\mu \in (0,\infty)$, we have
	\begin{equation}\label{i:mu-bound0}
		\|(\mu I+L_{\mathcal T})^{-1} G\|_{L^2(\Gamma)} \lesssim \mu^{-1}\|G\|_{L^2(\Gamma)}, \qquad \forall G \in \vh.
	\end{equation}
	In addition, for $0\le r\le t\le 1$ and $\mu\in(0,\infty)$, there holds
	\begin{equation}\label{e:bound-discrete}
		\|L_{\mathcal T} (\mu I+L_{\mathcal T})^{-1} G\|_{H^{r}(\Gamma)}\lesssim
		\mu^{(r-t)/2} \|G\|_{H^{t}(\Gamma)}, \qquad \forall G \in \vh.
	\end{equation}
\end{lemma}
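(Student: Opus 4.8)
The two estimates are discrete analogues of \eqref{i:bound1} and \eqref{e:bound}, and the natural strategy is to mimic their continuous counterparts via the spectral decomposition of the discrete operator $L_{\mathcal T}$. The plan is to first diagonalize $L_{\mathcal T}$. Since $L_{\mathcal T}: \vh \cap L^2_\#(\Gamma) \to \vh \cap L^2_\#(\Gamma)$ is symmetric and positive definite with respect to the $L^2(\Gamma)$ inner product, there exists an $L^2(\Gamma)$-orthonormal basis $\{\Psi_j\}$ of eigenfunctions with positive eigenvalues $\{\Lambda_j\}$. Writing $G = \sum_j G_j \Psi_j$ (recall $L_{\mathcal T}$ annihilates constants, so the mean-value component plays no role once we project), the resolvent acts diagonally: $(\mu I + L_{\mathcal T})^{-1} G = \sum_j (\mu + \Lambda_j)^{-1} G_j \Psi_j$. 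Then \eqref{i:mu-bound0} follows immediately from $(\mu+\Lambda_j)^{-1} \le \mu^{-1}$ together with Parseval's identity in $L^2(\Gamma)$.

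For \eqref{e:bound-discrete} I would introduce discrete dotted norms $\|W\|_{\dot H^r(\mathcal T)}^2 := \sum_j \Lambda_j^r |W_j|^2$ in exact parallel with \eqref{e:inter-def}, and proceed in two stages. First, for the spectral estimate: the symbol of $L_{\mathcal T}(\mu I + L_{\mathcal T})^{-1}$ on the $j$-th mode is $\Lambda_j/(\mu+\Lambda_j)$, so
\[
\|L_{\mathcal T}(\mu I + L_{\mathcal T})^{-1} G\|_{\dot H^r(\mathcal T)}^2
= \sum_j \Lambda_j^r \Big(\frac{\Lambda_j}{\mu+\Lambda_j}\Big)^2 |G_j|^2
= \sum_j \Lambda_j^t \Big(\frac{\Lambda_j^{1-(t-r)/2}}{\mu+\Lambda_j}\Big)^2 |G_j|^2.
\]
A Young-type inequality of the same flavor used in the proof of Lemma~\ref{l:sinc-stable} gives $\Lambda_j^{1-(t-r)/2}/(\mu+\Lambda_j) \lesssim \mu^{(r-t)/2}$ whenever $0 \le r \le t \le 1$ (so that the exponent $1-(t-r)/2$ lies in $[1/2,1]$ and the quotient is controlled); factoring this out yields $\|L_{\mathcal T}(\mu I + L_{\mathcal T})^{-1} G\|_{\dot H^r(\mathcal T)} \lesssim \mu^{(r-t)/2} \|G\|_{\dot H^t(\mathcal T)}$.

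The second stage, which I expect to be the main obstacle, is replacing the discrete dotted norms by the genuine Sobolev norms $H^r(\mathcal T)$ and $H^t(\mathcal T)$ appearing in the statement. Unlike the continuous setting \eqref{e:norm_equi_dotted}, the equivalence $\|W\|_{\dot H^r(\mathcal T)} \sim \|W\|_{H^r(\Gamma)}$ for discrete functions is delicate and is the crux of the lemma. For $r=0$ it is Parseval. For $r=1$ one has $\|W\|_{\dot H^1(\mathcal T)}^2 = a_\Gamma(W,W) = \|\nabla_\Gamma W\|_{L^2(\Gamma)}^2$, which is equivalent to $\|W\|_{H^1(\Gamma)}^2$ on $\vh \cap L^2_\#(\Gamma)$ by the (discrete) Poincar\'e inequality. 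The intermediate cases $0 < r < 1$ require that the discrete dotted norm be comparable to the real-interpolation norm $[L^2(\Gamma), H^1(\Gamma)]_{r,2}$ uniformly in $h$; here one must invoke $H^1$-stability of the $L^2(\Gamma)$-projection $\pi_{\mathcal T}$ (Proposition~\ref{p:l2-proj-approx}) to relate the discrete spectral scale to the continuous interpolation scale. The care needed is in keeping the norm-equivalence constants independent of $h$ — they will depend only on $c_J$, $c_q$, $c_v$ as advertised — which is exactly where the quasi-uniformity and bounded-valence hypotheses enter. Once this uniform equivalence is in hand, combining it with the spectral estimate from the second paragraph closes \eqref{e:bound-discrete}.
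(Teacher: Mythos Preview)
Your proposal is correct. Both your route and the paper's rest on the same key ingredient---the uniform equivalence $\|L_{\mathcal T}^{r/2} V\|_{L^2(\Gamma)} \sim \|V\|_{H^r(\Gamma)}$ on $\vh$ for $r\in[0,1]$, which the paper also invokes (citing \cite{BP15}) and which you correctly trace back to the $H^1$-stability of $\pi_{\mathcal T}$ in Proposition~\ref{p:l2-proj-approx}. The organization differs: the paper argues variationally, testing the resolvent equation with $V=W$ and $V=L_{\mathcal T}W$ to obtain the three endpoint cases $(r,t)\in\{(0,0),(1,1),(0,1)\}$ directly in the $H^r(\Gamma)$ norms, and then interpolates twice to fill in the triangle $0\le r\le t\le 1$. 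Your route instead works entirely in the discrete spectral scale, obtaining the full range of $(r,t)$ in one stroke from the scalar bound $\Lambda^{1-(t-r)/2}/(\mu+\Lambda)\le \mu^{(r-t)/2}$, and only afterwards transfers to the Sobolev scale via the norm equivalence. Your approach is slightly more economical (no separate endpoint analysis or repeated interpolation), while the paper's variational argument has the minor advantage of never explicitly invoking the eigenbasis.
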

\begin{proof}
	For $G \in \vh$, we set $W:=(\mu I+L_{\mathcal T})^{-1} G \in \vh$ which satisfies
	$$
		\mu \int_\Gamma W V + a_\Gamma(W,V) = \int_{\Gamma} G V, \qquad \forall V \in \vh.
	$$
	With this,  \eqref{i:mu-bound0} readily follows by choosing $V=W$.

	To derive \eqref{e:bound-discrete}, we start with the cases $(r,t)=(0,0)$, $(1,1)$ and $(0,1)$.
	Thanks to the stability of the $L^2$-projection $\pi_{\mathcal T}$  \eqref{e:stabl2}, the discrete dotted norms $\| L_{\mathcal T}^{r/2} . \|_{L^2(\Gamma)}$ are equivalent to the classical $H^r(\Gamma)$ norms $\| . \|_{H^r(\Gamma)}$ on the discrete space $\vh$, i.e.
	$$
		\| L_{\mathcal T}^{r/2} V \|_{L^2(\Gamma)} \sim \| V \|_{H^r(\Gamma)}, \qquad \forall V \in \vh, \qquad r \in [0,1].
	$$
	We refer for e.g. to \cite{BP15} for a proof.
	Whence, the choice $V=L_{\mathcal T}W$ and the definition of $a_\Gamma$ in \eqref{e:discrete-problem} yield
	\begin{equation}\label{e:bounda}
		\mu \| L_{\mathcal T}^{1/2} W \|_{L^2(\Gamma)} + \| L_{\mathcal T} W \|_{L^2(\Gamma)} = a_\Gamma(G,W).
	\end{equation}
	This in  \eqref{e:bounda} implies that
	$$
		\mu \| W \|_{H^1(\Gamma)}^2 + \| L_{\mathcal T} W \|_{L^2(\Gamma)}^2 \lesssim a_\Gamma(G,W)
	$$
	so that two different Young inequalities lead to
	\begin{equation}\label{i:mu-bound1}
		\| L_{\mathcal T}(\mu I+L_{\mathcal T})^{-1} G \|_{L^2(\Gamma)} \lesssim \|L_{\mathcal T} W\|_{L^2(\Gamma)}
		\lesssim\|G\|_{L^2(\Gamma)}
	\end{equation}
	and
	\begin{equation}\label{e:boundb}
		\mu \|W\|_{H^1(\Gamma)}+\mu^{1/2}\|L_{\mathcal T} W\|_{L^2(\Gamma)} \lesssim \|G\|_{H^1(\Gamma)}.
	\end{equation}
	Relation \eqref{i:mu-bound1} and \eqref{e:boundb} are the desired estimates \eqref{e:bound-discrete} for $r=0$ and $t=0,1$, while \eqref{e:boundb} also implies that
	\begin{equation}\label{i:mu-bound3}
		\|L_{\mathcal T}  (\mu I+L_{\mathcal T})^{-1} G\|_{H^1(\Gamma)}
		\lesssim \|G\|_{H^1(\Gamma)} + \mu \|W\|_{H^1(\Gamma)}
		\lesssim\|G\|_{H^1(\Gamma)},
	\end{equation}
	which is \eqref{e:bound-discrete} for $r=1$ and $t=1$.

	The general results for $0\le r\le t\le 1$ follows by interpolation.
	The case $r=t$ case follows from the interpolation between the case $(r,t)=(0,0)$ and $(1,1)$, i.e.
	\eqref{e:boundb} and \eqref{i:mu-bound3} while the case $t=1$ ($r\in[0,1]$) is derived from the interpolation between \eqref{i:mu-bound1} and \eqref{e:boundb}. Interpolation between the case $r=t$ and the case $t=1$ gives \eqref{e:bound-discrete} in general.
\end{proof}

\subsection{Error Representation}\label{ss:error}

We now derive a representation of the difference $P\widetilde u_k - U_{k}$ instrumental for the analysis of the method.
First, we introduce $\pi_{\mathcal T} P\widetilde u_k$ and write
$$
	P\widetilde u_k - U_{k} = (P\widetilde u_k - \pi_{\mathcal T}P\widetilde u_k) + (\pi_{\mathcal T} P\widetilde u_k - U_k).
$$
The first difference $P\widetilde u_k - \pi_{\mathcal T} P \widetilde u_{k}$ concerns the approximation properties of the $L^2$ projection and can be estimated using Proposition~\ref{p:l2-proj-approx}.
The second difference is more problematic.
We start by deriving an error representation instrumental for the proposed analysis.

We expand the expression for  $\pi_{\mathcal T}P\widetilde u_k - U_{k}$ using \eqref{e:quadrature} and \eqref{e:dfrac} to arrive at
\begin{equation}\label{e:error_rep}
	\pi_{\mathcal T} P\widetilde u_k - U_{k} =  \frac{k\sin(\pi s)}{\pi}\sum_{\ell=-M}^N  \mu_\ell^{1-s}  \mathcal W_\ell \widetilde f,
\end{equation}
where $\mu_\ell:= e^{y_\ell}$ and
\begin{equation}\label{e:error_representation_A}
	\mathcal W_\ell \widetilde f:=\pi_{\mathcal T}P(\mu_{\ell}I+  L)^{-1}  \widetilde f -(\mu_{\ell}I+  L_{\mathcal T})^{-1} \pi_{\mathcal T}  \Ps \widetilde f.
\end{equation}
After several algebraic manipulations, we rewrite the above relation as
\begin{equation}\label{e:error_representation_B}
	\begin{split}
		\mathcal W_\ell \widetilde f& =(\mu_{\ell} I+L_{\mathcal T})^{-1} \left(  (\mu_{\ell} I+L_{\mathcal T})\pi_{\mathcal T} P  - \pi_{\mathcal T} \Ps (\mu_{\ell} I+L)\right)(\mu_{\ell} I+L)^{-1}\widetilde f\\
		& =L_{\mathcal T}(\mu_{\ell} I+L_{\mathcal T})^{-1} (\pi_{\mathcal T} P T-   T_{\mathcal T}\pi_{\mathcal T}\Ps)L(\mu_{\ell} I+L)^{-1} \widetilde f \\
		& \qquad + \mu_{\ell}(\mu_{\ell} I+L_{\mathcal T})^{-1}\pi_{\mathcal T}(P-\Ps)(\mu_{\ell} I+L)^{-1} \widetilde f=: \mathcal W^1_\ell  \widetilde f+ \mathcal W^2_\ell \widetilde f.
	\end{split}
\end{equation}
Note that the term $\pi_{\mathcal T} P T-   T_{\mathcal T}\pi_{\mathcal T} \Ps$ in $\mathcal W^1_\ell$ relates to the approximation of the solution to the  Laplace-Beltrami problem \eqref{e:dirichlet} by the finite element method \eqref{e:discrete-problem} while  $\pi_{\mathcal T} (P-\Ps)$ in $\mathcal W^2_\ell$ is of geometric nature and is dictated by the approximation of $\gamma$ by $\Gamma$.
We will estimate these two term separately.

\subsection{Geometric Approximation}
We recall that we assume that $\Gamma \subset \mathcal N$ and list several instrumental estimates regarding to approximation of $\gamma$ with $\Gamma$.
For their proofs, we refer for instance to \cite{bonito2020finite}.
We recall that thanks to the orthogonal property of $\mathbf P$, there exists a constant $c_\sigma$ only depending on $\gamma$ satisfying,
\begin{equation}\label{i:q-estimate}
	\|1-\sigma\|_{L^\infty(\Gamma)} \le c_\sigma h^2
\end{equation}
and $0<c\leq |\sigma(\bx)|\leq C$, $\bx \in \Gamma$, for some constants $c$ and $C$ only depending on $\gamma$.
Furthermore, the discrepancy between the bilinear forms $a_\gamma$ and $a_\Gamma$ satisfies
\begin{equation}\label{e:geom_bilinear}
	\int_{\Gamma} \nabla_{\Gamma} P\widetilde v \cdot \nabla_{\Gamma} P\widetilde w -
	\int_{\gamma} \nabla_{\gamma} \widetilde v \cdot \nabla_{\gamma} \widetilde w
	= \int_{\gamma} \nabla_{\gamma} \widetilde v \cdot {\mathbf E}\nabla_{\gamma} \widetilde w,
\end{equation}
where the error matrix ${\mathbf E}$ reflects the approximation of $\Gamma$ by $\gamma$ and satisfies
\begin{equation}\label{e:E}
	\|\mathbf E\|_{L^\infty(\gamma)}\lesssim h^2;
\end{equation}
see Lemma~15 and equation (56) in \cite{bonito2020finite}.

Thanks to the  $C^3$ regularity assumption on $\gamma$, the lift $\bP \in C^2(\mathcal N)^n$  and we have the equivalence of norms
$$
	\|P\widetilde v\|_{H^j(\Gamma)}\sim \|\widetilde v\|_{H^j(\gamma)}
$$
for all $\widetilde v \in H^j(\gamma)$, $j=0,1,2$.
By interpolation, we also deduce that for $r \in [0,2]$
\begin{equation}\label{i:norm-equivalency-ba}
	\|P\widetilde v\|_{H^r(\Gamma)}\sim \|\widetilde v\|_{H^r(\gamma)}, \qquad \forall  \widetilde v \in H^r(\gamma).
\end{equation}


With these results, we are able to estimate $\mathcal W^2_\ell$ in the error representation \eqref{e:error_representation_B}.
\begin{lemma}\label{l:W2}
	For $\widetilde f \in L^2_\#(\gamma)$ we have
	$$
		\|\mathcal W^2_\ell \widetilde f \|_{H^j(\Gamma)} \lesssim\min(1,\mu_\ell^{-1}) h^{2-j} \| \widetilde f \|_{L^2(\gamma)}, \qquad j=0,1.
	$$
\end{lemma}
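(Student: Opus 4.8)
The plan is to unravel the definition \eqref{e:error_representation_B} of $\mathcal W^2_\ell$ and reduce the whole statement to a single $L^2$ bound on the interior data. Writing $\widetilde w := (\mu_\ell I + L)^{-1}\widetilde f$ and using $\Ps = \sigma P$, the geometric factor collapses to $(P - \Ps)\widetilde w = (1-\sigma)P\widetilde w$, so that
$$
\mathcal W^2_\ell \widetilde f = \mu_\ell(\mu_\ell I + L_{\mathcal T})^{-1} G, \qquad G := \pi_{\mathcal T}\big[(1-\sigma)P\widetilde w\big] \in \vh .
$$
First I would control $G$ in $L^2(\Gamma)$. Combining the $L^2$-stability of $\pi_{\mathcal T}$ from \eqref{e:stabl2}, the geometric consistency $\|1-\sigma\|_{L^\infty(\Gamma)}\lesssim h^2$ from \eqref{i:q-estimate}, the norm equivalence $\|P\widetilde w\|_{L^2(\Gamma)}\sim \|\widetilde w\|_{L^2(\gamma)}$ from \eqref{i:norm-equivalency-ba}, and finally the resolvent bound \eqref{i:bound1} for $\widetilde w$ (legitimate since $\widetilde f\in L^2_\#(\gamma)$), I obtain the single key estimate
$$
\|G\|_{L^2(\Gamma)} \lesssim h^2 \|P\widetilde w\|_{L^2(\Gamma)} \lesssim h^2 \|\widetilde w\|_{L^2(\gamma)} \lesssim \min(1,\mu_\ell^{-1})\, h^2\, \|\widetilde f\|_{L^2(\gamma)} .
$$
It is precisely the $O(h^2)$ appearing here that forces the use of the orthogonal (signed distance) lift; a generic lift would only furnish $O(h)$.

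For $j=0$ the conclusion is then immediate: the operator $\mu_\ell(\mu_\ell I + L_{\mathcal T})^{-1}$ is bounded on $L^2(\Gamma)$ uniformly in $\mu_\ell$ (this is exactly \eqref{i:mu-bound0} multiplied through by $\mu_\ell$), whence $\|\mathcal W^2_\ell \widetilde f\|_{L^2(\Gamma)} \lesssim \|G\|_{L^2(\Gamma)}$ and the claimed bound follows.

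The case $j=1$ is the only delicate point, and the natural obstacle is the $\mu_\ell$-dependence. A direct energy estimate for $W:=(\mu_\ell I + L_{\mathcal T})^{-1}G$ only yields $\|\nabla_\Gamma W\|_{L^2(\Gamma)} \lesssim \mu_\ell^{-1/2}\|G\|_{L^2(\Gamma)}$, hence $\|\mathcal W^2_\ell\widetilde f\|_{H^1(\Gamma)} \lesssim \mu_\ell^{1/2}\|G\|_{L^2(\Gamma)}$, which carries the wrong power of $\mu_\ell$ for large $\mu_\ell$ and is therefore too weak to reproduce the $\min(1,\mu_\ell^{-1})$ decay. The resolution I would use exploits that $\mathcal W^2_\ell \widetilde f$ is \emph{itself} a finite element function in $\vh$, since both $\pi_{\mathcal T}$ and the discrete resolvent $(\mu_\ell I + L_{\mathcal T})^{-1}$ map into $\vh$. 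Because the subdivision is quasi-uniform, the global inverse inequality $\|V\|_{H^1(\Gamma)}\lesssim h^{-1}\|V\|_{L^2(\Gamma)}$ (with constant depending only on $c_q,c_J,c_v$) applies to $V=\mathcal W^2_\ell\widetilde f$, giving
$$
\|\mathcal W^2_\ell\widetilde f\|_{H^1(\Gamma)} \lesssim h^{-1}\|\mathcal W^2_\ell\widetilde f\|_{L^2(\Gamma)} \lesssim h^{-1} \min(1,\mu_\ell^{-1})\,h^2\,\|\widetilde f\|_{L^2(\gamma)} = \min(1,\mu_\ell^{-1})\, h\, \|\widetilde f\|_{L^2(\gamma)} .
$$
This preserves the full $\min(1,\mu_\ell^{-1})$ factor while spending exactly one power of $h$, which is what the statement requires. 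As an alternative avoiding the inverse inequality, one can write $\mu_\ell(\mu_\ell I + L_{\mathcal T})^{-1} = I - L_{\mathcal T}(\mu_\ell I + L_{\mathcal T})^{-1}$ and invoke the $H^1$-uniform bound \eqref{e:bound-discrete} with $r=t=1$ together with an inverse estimate applied to $\|G\|_{H^1(\Gamma)}$; both routes land on the same estimate.
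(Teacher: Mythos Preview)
Your proof is correct and follows essentially the same approach as the paper: factor $\mathcal W^2_\ell$ through the geometric term $(1-\sigma)P$, use \eqref{i:q-estimate}, \eqref{i:norm-equivalency-ba}, \eqref{e:stabl2}, \eqref{i:bound1} and \eqref{i:mu-bound0} to control everything in $L^2$, and then apply the inverse inequality on $\vh$ to pass from $L^2(\Gamma)$ to $H^1(\Gamma)$ at the cost of one power of $h$. The paper phrases the chain in operator-norm notation rather than introducing the intermediate datum $G$, but the ingredients and their order are identical.
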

\begin{proof}
	We note that for $j=0,1$,
	\[
		\begin{aligned}
			\|\mathcal W^2_\ell \|_{L^2(\gamma)\to H^j(\Gamma)} & \lesssim \mu_\ell\|(\mu_\ell I + L_{\mathcal T})^{-1}\pi_{\mathcal T}\|_{L^2(\Gamma)\to H^j(\Gamma)}        \\
			                                                    & \qquad\|(1-\sigma) P\|_{L^2(\gamma)\to L^2(\Gamma)} \|(\mu_\ell I + L)^{-1}\|_{L^2(\gamma)\to L^2(\gamma)}  \\
			                                                    & \lesssim h^{-j} \mu_\ell\|(\mu_\ell I + L_{\mathcal T})^{-1}\pi_{\mathcal T}\|_{L^2(\Gamma)\to L^2(\Gamma)} \\
			                                                    & \qquad\|(1-\sigma) P\|_{L^2(\gamma)\to L^2(\Gamma)} \|(\mu_\ell I + L)^{-1}\|_{L^2(\gamma)\to L^2(\gamma)},
		\end{aligned}
	\]
	where we applied the inverse inequality to justify the second inequality. Using the geometric estimates \eqref{i:q-estimate} and \eqref{i:norm-equivalency-ba}, we find that
	\[
		\|(1-\sigma)P\|_{L^2(\gamma)\to L^2(\Gamma)}
		\lesssim \|1-\sigma\|_{L^\infty(\Gamma)} \|P\|_{L^2(\gamma)\to L^2(\Gamma)}
		\lesssim h^2
	\]
	and so
	$$
		\|\mathcal W^2_\ell \|_{L^2(\gamma)\to H^j(\Gamma)} \lesssim h^{2-j} \mu_\ell\|(\mu_\ell I + L_{\mathcal T})^{-1}\pi_{\mathcal T}\|_{L^2(\Gamma)\to L^2(\Gamma)} \|(\mu_\ell I + L)^{-1}\|_{L^2(\gamma)\to L^2(\gamma)}.
	$$
	This, together with   \eqref{i:mu-bound0}, \eqref{i:bound1} and the stability of the $L^2$ projection \eqref{e:stabl2} yields the desired estimate.
\end{proof}

\subsection{Error Estimate for the Laplace-Beltrami problem}
As already pointed out, the error representation \eqref{e:error_rep} requires estimates for the difference between $PT\widetilde f$,  the solution to Laplace-Beltrami problem \eqref{e:dirichlet} mapped to $\Gamma$ via $P$ and its finite element approximation $T_{\mathcal T} \pi_{\mathcal T} P\widetilde f$.
When the distortion is measured in $H^1$ or in $L^2$, Theorem~37 and~38 in \cite{bonito2020finite} guarantee that
\begin{equation}\label{i:error-lb}
	\|(P T-T_{\mathcal T} \pi_{\mathcal T} \Ps)\widetilde f\|_{L^2(\Gamma)}+h\|(P T-T_{\mathcal T} \pi_{\mathcal T} \Ps) \widetilde f\|_{H^1(\Gamma)}
	\lesssim h^2 \|\widetilde f\|_{L^2(\gamma)}.
\end{equation}
Using the approximation property of the $L^2$ projection \eqref{i:l2-proj-approx} and the regularity estimate \eqref{e:regularity-estim} for $s=1$ and $r=0$, we deduce a slight modification of the above error estimate
\begin{equation}\label{i:error-lb-2}
	\|(\pi_{\mathcal T} P T-T_{\mathcal T} \pi_{\mathcal T} \Ps)\widetilde f\|_{L^2(\Gamma)}+h\|(\pi_{\mathcal T}P T-T_{\mathcal T} \pi_{\mathcal T} \Ps) \widetilde f\|_{H^1(\Gamma)}
	\lesssim h^2 \|\widetilde f\|_{L^2(\gamma)}.
\end{equation}

The following proposition provides the error estimates in interpolation scales.

\begin{proposition}\label{p:approximation}
	Let $\widetilde f\in L^2_\#(\gamma)$ and $\alpha\in [0,1]$. Then there hold
	\begin{equation}\label{i:sol-l2proj-bound-h1}
		\|(\pi_{\mathcal T} P T-T_{\mathcal T} \pi_{\mathcal T}\Ps)\widetilde f\|_{H^1(\Gamma)} \lesssim h^{\alpha} \|\widetilde f\|_{\HH^{\alpha-1}(\gamma)}
	\end{equation}
	and
	\begin{equation}\label{i:sol-l2proj-bound-r}
		\|(\pi_{\mathcal T} P T-T_{\mathcal T}\pi_{\mathcal T}\Ps)\widetilde f\|_{H^{1-\alpha}(\mathcal T)}\lesssim h^{2\alpha}\|\widetilde f\|_{\HH^{\alpha-1}(\gamma)}.
	\end{equation}
\end{proposition}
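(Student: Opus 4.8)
The plan is to obtain both \eqref{i:sol-l2proj-bound-h1} and \eqref{i:sol-l2proj-bound-r} by operator interpolation between the endpoints $\alpha=1$ and $\alpha=0$, viewing $\mathcal E:=\pi_{\mathcal T}PT-T_{\mathcal T}\pi_{\mathcal T}\Ps$ as a family of bounded linear maps. The endpoint $\alpha=1$ is already available: estimate \eqref{i:error-lb-2} furnishes simultaneously $\|\mathcal E\widetilde f\|_{L^2(\Gamma)}\lesssim h^2\|\widetilde f\|_{L^2(\gamma)}$ and $\|\mathcal E\widetilde f\|_{H^1(\Gamma)}\lesssim h\|\widetilde f\|_{L^2(\gamma)}$, which are precisely the $\alpha=1$ instances of \eqref{i:sol-l2proj-bound-r} (with $H^{1-\alpha}=L^2$) and of \eqref{i:sol-l2proj-bound-h1}. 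Thus the two scales share the right endpoint $L^2(\gamma)$ on the source side and differ only in the target, $H^1(\Gamma)$ versus $L^2(\Gamma)$.

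The only missing ingredient, which is also the crux of the argument, is the common left endpoint $\alpha=0$, namely the stability bound
\[
\|\mathcal E\widetilde f\|_{H^1(\Gamma)}\lesssim\|\widetilde f\|_{\HH^{-1}(\gamma)},\qquad \widetilde f\in\HHs^{-1}(\gamma).
\]
I would prove it by treating the two contributions to $\mathcal E$ separately. For the consistent term $\pi_{\mathcal T}PT\widetilde f$, the Lax--Milgram stability of $T$ together with the Poincar\'e--Friedrich inequality \eqref{e:poincare} gives $\|T\widetilde f\|_{H^1(\gamma)}\lesssim\|\widetilde f\|_{\HH^{-1}(\gamma)}$; the norm equivalence \eqref{i:norm-equivalency-ba} transfers this to $\|PT\widetilde f\|_{H^1(\Gamma)}$, and the $H^1$-stability of $\pi_{\mathcal T}$ from \eqref{e:stabl2} closes this term. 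For the discrete term $T_{\mathcal T}\pi_{\mathcal T}\Ps\widetilde f$, I would first record the discrete stability $\|T_{\mathcal T}G\|_{H^1(\Gamma)}\lesssim\|G\|_{H^{-1}(\Gamma)}$, obtained by testing \eqref{e:discrete-problem} with $V=T_{\mathcal T}G$ and invoking the (uniform in $h$) Poincar\'e--Friedrich inequality on $\Gamma$ for mean-zero discrete functions. It then remains to control $\|\pi_{\mathcal T}\Ps\widetilde f\|_{H^{-1}(\Gamma)}$ by $\|\widetilde f\|_{\HH^{-1}(\gamma)}$.

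This last $H^{-1}(\Gamma)$ bound is where the real work lies, and it is what makes $\pi_{\mathcal T}\Ps\widetilde f\in\vh$ meaningful for data in $\HHs^{-1}(\gamma)$ in the first place. The key observation is that, since $\pi_{\mathcal T}$ is $L^2(\Gamma)$-self-adjoint, for any $V\in H^1(\Gamma)$ one has $\int_\Gamma(\pi_{\mathcal T}\Ps\widetilde f)V=\int_\Gamma\Ps\widetilde f\,\pi_{\mathcal T}V=\int_\gamma\widetilde f\,(\pi_{\mathcal T}V)^\ell$, where $(\pi_{\mathcal T}V)^\ell:=(\pi_{\mathcal T}V)\circ\bP^{-1}$ is the lift to $\gamma$ and the factor $\sigma$ built into $\Ps$ is exactly the area ratio that converts the $\Gamma$-integral into a $\gamma$-integral. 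Using that $\widetilde f$ has zero mean, the duality bound $|\int_\gamma\widetilde f\,g|\lesssim\|\widetilde f\|_{\HH^{-1}(\gamma)}\|g\|_{H^1(\gamma)}$, the norm equivalence \eqref{i:norm-equivalency-ba} for the lift, and the $H^1$-stability \eqref{e:stabl2} of $\pi_{\mathcal T}$ then yield $\|\pi_{\mathcal T}\Ps\widetilde f\|_{H^{-1}(\Gamma)}\lesssim\|\widetilde f\|_{\HH^{-1}(\gamma)}$, completing the $\alpha=0$ endpoint.

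With both endpoints in hand, I would conclude by interpolation. On the source side, $[\HHs^{-1}(\gamma),L^2_\#(\gamma)]_{\alpha,2}=\HHs^{\alpha-1}(\gamma)$ via the dotted scale \eqref{e:norm_equi_dotted}. Interpolating $\mathcal E$ between $\HH^{-1}(\gamma)\to H^1(\Gamma)$ of norm $\lesssim1$ and $L^2(\gamma)\to H^1(\Gamma)$ of norm $\lesssim h$ gives operator norm $\lesssim h^{\alpha}$ and hence \eqref{i:sol-l2proj-bound-h1}. For \eqref{i:sol-l2proj-bound-r}, interpolating the same left endpoint with $L^2(\gamma)\to L^2(\Gamma)$ of norm $\lesssim h^2$ and using $[H^1(\Gamma),L^2(\Gamma)]_{\alpha,2}=H^{1-\alpha}(\mathcal T)$ on the target side gives norm $\lesssim 1^{1-\alpha}(h^2)^{\alpha}=h^{2\alpha}$, as claimed. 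I expect the main obstacle to be precisely the $\alpha=0$ stability, and within it the duality computation controlling $\pi_{\mathcal T}\Ps$ in the discrete $H^{-1}(\Gamma)$ norm together with the change of variables absorbing $\sigma$; once this is secured, the remainder is routine interpolation bookkeeping.
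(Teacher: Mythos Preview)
Your approach to \eqref{i:sol-l2proj-bound-h1} coincides with the paper's: both establish the $\alpha=0$ stability $\|\mathcal E\widetilde f\|_{H^1(\Gamma)}\lesssim\|\widetilde f\|_{H^{-1}(\gamma)}$ by treating the two pieces of $\mathcal E$ separately and then interpolate with the $\alpha=1$ estimate \eqref{i:error-lb}/\eqref{i:error-lb-2}. The paper interpolates $PT-T_{\mathcal T}\pi_{\mathcal T}\Ps$ first and inserts $\pi_{\mathcal T}$ afterwards via \eqref{i:l2-proj-approx}, whereas you interpolate $\mathcal E$ directly; this is a cosmetic difference. Your duality computation for $\|\pi_{\mathcal T}\Ps\widetilde f\|_{H^{-1}(\Gamma)}$ is a clean way to make explicit what the paper packages under ``standard energy estimates''.

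For \eqref{i:sol-l2proj-bound-r} the two routes genuinely diverge. You obtain it by a second operator interpolation, now letting both the source \emph{and} the target vary: from $\mathcal E:H^{-1}_\#(\gamma)\to H^1(\Gamma)$ with norm $\lesssim1$ to $\mathcal E:L^2_\#(\gamma)\to L^2(\Gamma)$ with norm $\lesssim h^2$, landing in $H^{\alpha-1}_\#(\gamma)\to H^{1-\alpha}(\Gamma)$ with norm $\lesssim h^{2\alpha}$. The paper instead fixes $\alpha$ and runs an Aubin--Nitsche duality argument directly on $\gamma$: it writes $\|\widetilde e-\bar e\|_{H^{1-\alpha}(\gamma)}$ as a supremum over $\widetilde w\in H^{\alpha-1}_\#(\gamma)$, inserts the adjoint solution $T\widetilde w$, and splits $a_\gamma(\widetilde e,T\widetilde w)$ into a Galerkin-type term bounded via the already-proved \eqref{i:sol-l2proj-bound-h1} and a geometric consistency term controlled by the error matrix $\mathbf E$ from \eqref{e:geom_bilinear}--\eqref{e:E}; a separate small computation handles the mean $\bar e$. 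Your interpolation argument is shorter and reuses the $\alpha=0$ endpoint already needed for \eqref{i:sol-l2proj-bound-h1}. The paper's duality argument, on the other hand, keeps the geometric and approximation contributions visible, which is informative when one later asks whether the same estimates survive for generic (non-orthogonal) lifts where $\|\mathbf E\|_{L^\infty}$ and $\|1-\sigma\|_{L^\infty}$ are only $O(h)$.
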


We postpone the proof in Appendix~\ref{a:error} to focus on the estimation of $\mathcal W^1_\ell$ from the error representation \eqref{e:error_representation_B}.

\begin{lemma}\label{l:res}
	Let $\alpha\in [0,1]$ and $\delta \in [0,(1+\alpha)/2]$. For $\widetilde f\in \HHs^{2\delta}(\gamma)$ there holds
	$$
		\|\mathcal W^1_\ell \widetilde f\|_{L^2(\Gamma)}
		\lesssim \mu_\ell^{\alpha-\delta-1} h^{2\alpha}\|\widetilde f\|_{\HH^{2\delta}(\gamma)}.
	$$
	Furthermore, for $\alpha \in [\tfrac12,1]$ and $\delta \in [0, \alpha]$, we have
	$$
		\|\mathcal W^1_\ell \widetilde f\|_{H^1(\Gamma)}
		\lesssim \mu_\ell^{\alpha-\delta-1} h^{2\alpha-1}\|\widetilde f\|_{\HH^{2\delta}(\gamma)}.
	$$
\end{lemma}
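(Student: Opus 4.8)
The plan is to read off $\mathcal W^1_\ell$ from the error representation \eqref{e:error_representation_B} as a composition of three operators and to bound each factor in turn. Setting $\widetilde g := L(\mu_\ell I+L)^{-1}\widetilde f$ and $E := \pi_{\mathcal T}PT - T_{\mathcal T}\pi_{\mathcal T}\Ps$, we have $\mathcal W^1_\ell \widetilde f = L_{\mathcal T}(\mu_\ell I+L_{\mathcal T})^{-1}E\widetilde g$. Since both $\pi_{\mathcal T}PT\widetilde g$ and $T_{\mathcal T}\pi_{\mathcal T}\Ps\widetilde g$ belong to $\vh$, the function $E\widetilde g$ is discrete, so the discrete resolvent bound \eqref{e:bound-discrete} of Lemma~\ref{l:bound-discrete} applies to the outer factor; the middle factor $E\widetilde g$ is controlled by the Laplace--Beltrami approximation estimates of Proposition~\ref{p:approximation}; and the innermost factor $\widetilde g$ is handled by the continuous resolvent bound \eqref{e:bound}. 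The entire argument then reduces to choosing the interpolation indices so that the three exponent contributions in $\mu_\ell$ and $h$ sum to the asserted powers.

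For the $L^2(\Gamma)$ bound I would route through the intermediate index $H^{1-\alpha}(\mathcal T)$. Estimate \eqref{e:bound-discrete} with $r=0$ and $t=1-\alpha\in[0,1]$ gives $\|\mathcal W^1_\ell\widetilde f\|_{L^2(\Gamma)}\lesssim\mu_\ell^{-(1-\alpha)/2}\|E\widetilde g\|_{H^{1-\alpha}(\mathcal T)}$; then \eqref{i:sol-l2proj-bound-r} (with its index equal to $\alpha$) yields $\|E\widetilde g\|_{H^{1-\alpha}(\mathcal T)}\lesssim h^{2\alpha}\|\widetilde g\|_{\HH^{\alpha-1}(\gamma)}$; and finally \eqref{e:bound}, applied to $\widetilde f$ with its first index $1-\alpha$ and second index $2\delta$, gives $\|\widetilde g\|_{\HH^{\alpha-1}(\gamma)}\lesssim\mu_\ell^{-(1-\alpha)/2-\delta}\|\widetilde f\|_{\HH^{2\delta}(\gamma)}$. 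The product of the three bounds is $\mu_\ell^{-(1-\alpha)-\delta}h^{2\alpha}=\mu_\ell^{\alpha-\delta-1}h^{2\alpha}$, as claimed, and the admissibility conditions of \eqref{e:bound}, namely $1-\alpha\in[0,1]$ and $(1-\alpha)+2\delta\le2$, are exactly $\alpha\in[0,1]$ and $\delta\le(1+\alpha)/2$.

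For the $H^1(\Gamma)$ bound I would instead apply \eqref{e:bound-discrete} with $r=t=1$, which makes the outer factor bounded on $H^1$ with no power of $\mu_\ell$, so $\|\mathcal W^1_\ell\widetilde f\|_{H^1(\Gamma)}\lesssim\|E\widetilde g\|_{H^1(\Gamma)}$. I would then use \eqref{i:sol-l2proj-bound-h1} with its index equal to $2\alpha-1$ to get $\|E\widetilde g\|_{H^1(\Gamma)}\lesssim h^{2\alpha-1}\|\widetilde g\|_{\HH^{2\alpha-2}(\gamma)}$, and close with \eqref{e:bound}, applied to $\widetilde f$ with first index $2(1-\alpha)$ and second index $2\delta$, to obtain $\|\widetilde g\|_{\HH^{2\alpha-2}(\gamma)}\lesssim\mu_\ell^{-(1-\alpha)-\delta}\|\widetilde f\|_{\HH^{2\delta}(\gamma)}$. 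Multiplying gives $\mu_\ell^{\alpha-\delta-1}h^{2\alpha-1}$.

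The proof itself is routine composition; the only point demanding care is tracking the admissible parameter ranges, which is precisely where the hypotheses of the lemma originate. Requiring the index $2\alpha-1$ in \eqref{i:sol-l2proj-bound-h1} to be nonnegative, and the first index $2(1-\alpha)$ in \eqref{e:bound} to lie in $[0,1]$, both force $\alpha\ge\tfrac12$, while the condition $2(1-\alpha)+2\delta\le2$ in \eqref{e:bound} is exactly $\delta\le\alpha$. Thus the restrictions $\alpha\in[\tfrac12,1]$, $\delta\in[0,\alpha]$ in the $H^1$ case (and $\alpha\in[0,1]$, $\delta\in[0,(1+\alpha)/2]$ in the $L^2$ case) are dictated entirely by the ranges of validity of the three ingredient estimates, and no work beyond the exponent arithmetic remains.
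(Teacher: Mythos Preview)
Your proof is correct and follows essentially the same approach as the paper: both decompose $\mathcal W^1_\ell$ as a three-factor composition and apply \eqref{e:bound-discrete}, Proposition~\ref{p:approximation}, and \eqref{e:bound} with the same choices of indices ($r=0$, $t=1-\alpha$ then $\alpha$ then $r=1-\alpha$, $t=2\delta$ for the $L^2$ case; $r=t=1$ then $2\alpha-1$ then $r=2-2\alpha$, $t=2\delta$ for the $H^1$ case). Your additional discussion of how the parameter restrictions arise from the admissibility ranges of the three ingredient estimates is a nice clarification that the paper leaves implicit.
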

\begin{proof}
	The $L^2$ estimate is derived upon composing the following three estimates
	\begin{align*}
		                                                                                               & \|L(\mu_\ell I+L)^{-1} \widetilde g \|_{\HH^{\alpha-1}(\gamma)}
		\leq  \mu_\ell^{-\delta+\frac{\alpha-1}{2}} \| \widetilde g \|_{\HH^{2\delta}(\gamma)}, \qquad &                                                                                                                                                                            & \forall \widetilde g \in \HH^{2\delta}_\#(\gamma)                                                     \\
		                                                                                               & \|(\pi_{\mathcal T} P T-   T_{\mathcal T}\pi_{\mathcal T}\Ps) \widetilde g \|_{H^{1-\alpha}(\Gamma)} \lesssim h^{2\alpha} \| \widetilde g\|_{H^{\alpha-1}(\gamma)}, \qquad &                                                   & \forall \widetilde g \in H^{\alpha-1}_\#(\gamma), \\
		                                                                                               & \|L_{\mathcal T}(\mu_\ell I+L_{\mathcal T})^{-1} G\|_{L^2(\Gamma)} \lesssim \mu_\ell^{\frac{\alpha-1}{2}}\| G \|_{H^{1-\alpha}(\Gamma)}, \qquad                            &                                                   & \forall G \in \vh.
	\end{align*}
	The first estimate follows from \eqref{e:bound} with $r=1-\alpha$ and $t=2\delta$, the second estimate is \eqref{i:sol-l2proj-bound-r}, and the third estimate follows from the discrete estimate \eqref{e:bound-discrete} with $r=0$ and $t=1-\alpha$.

	Similarly for the $H^1(\Gamma)$ estimate, we invoke the following three estimate
	\begin{align*}
		\|L(\mu_\ell I+L)^{-1} \widetilde g \|_{\HH^{2\alpha-2}(\gamma)}
		                                                                                              & \lesssim \mu_\ell^{-\delta+\alpha-1} \| \widetilde g \|_{\HH^{2\delta}(\gamma)}, \qquad &  & \forall \widetilde g \in \HH^{2\delta}_\#(\gamma), \\
		\|(\pi_{\mathcal T} P T-   T_{\mathcal T}\pi_{\mathcal T}\Ps) \widetilde g \|_{H^{1}(\Gamma)} & \lesssim h^{2\alpha-1} \| \widetilde g\|_{H^{2\alpha-2}(\gamma)}, \qquad                &  & \forall \widetilde g \in H^{2\alpha-2}_\#(\gamma), \\
		\|L_{\mathcal T}(\mu_\ell I+L_{\mathcal T})^{-1} G\|_{H^1(\Gamma)}                            & \lesssim \| G \|_{H^{1}(\Gamma)}, \qquad                                                &  & \forall G \in \vh,
	\end{align*}
	where this time the first estimate follows from \eqref{e:bound} with $r=2-2\alpha$ and $t=2\delta$, the second estimate is \eqref{i:sol-l2proj-bound-h1}, and the third estimate  follows from the discrete estimate \eqref{e:bound-discrete} with $r=t=1$.
\end{proof}

\subsection{Proof of Theorem~\ref{t:rate} and Corollary~\ref{c:rate}}\label{ss:proof_main}

We are now in a position to prove Theorem~\ref{t:rate} and Corollary~\ref{c:rate}.

\subsubsection{Proof of Theorem~\ref{t:rate}}
We only  prove the $L^2$ estimate with $\delta+s\le 1$ and comment on how to similarly derive all the other cases at the end of the proof.

As mentioned in the beginning of Section~\ref{ss:error}, we decompose the difference $P\widetilde u_k - U_k$ into two parts
$$
	P\widetilde u_k - U_k = (P\widetilde u_k - \pi_{\mathcal T}P\widetilde u_k) + (\pi_{\mathcal T} P\widetilde u_k - U_k)
$$
and estimate each term separately. We set $\as := s+\delta-\epsilon'$ for some $2\epsilon' < s+\delta$ to be chosen later.

\boxed{1} For the $L^2$ projection error term we have
$$
	\begin{aligned}
		\|(I  -\pi_{\mathcal T})  P\widetilde u_k\|_{L^2(\Gamma)}
		 & \lesssim h^{2\as}\|\widetilde u_k\|_{\HH^{2\delta+2s-2\epsilon}(\gamma)}                                                           \ \text{( \eqref{i:l2-proj-approx} with $t=0$, $r=2\delta+2s-2\epsilon'$)}   \\
		 & \lesssim h^{2\as}\|\widetilde u_k\|_{\dH^{2\delta+2s-2\epsilon}(\gamma)}                                         \                  (\textrm{norm equivalence } \eqref{e:norm_equi_dotted})                     \\
		 & \lesssim \frac{1}{\epsilon'}h^{2\as}\|\widetilde f\|_{H^{2\delta}(\gamma)}.                                         \     (\textrm{Lemma}~\ref{l:sinc-stable} \textrm{with }t=2\delta+2s, \epsilon=2\epsilon' )
	\end{aligned}
$$

\boxed{2} It remains to estimate $\| \pi_{\mathcal T} P\widetilde u_k-U_k\|_{L^2(\Gamma)}$. According to \eqref{e:error_rep}, we write
\begin{equation}\label{i:01}
	\|\pi_{\mathcal T} P \widetilde u_k-U_k\|_{L^2(\Gamma)}\lesssim k\sum_{\ell=-M}^N \mu_{\ell}^{1-s}
	\|\mathcal W_\ell \widetilde f\|_{L^2(\Gamma)}.
\end{equation}
Different arguments are needed depending on whether $\mu_{\ell}\in(0,1)$, $\mu_{\ell}\in [1,h^{-2\alpha^*/s})$ and $\mu_{\ell}\in [h^{-2\alpha^*/s},\infty)$.

\boxed{3} For $\mu_{\ell}\in [h^{-2\alpha^*/s},\infty)$, we use \eqref{e:error_representation_A} to write
$$
	\|\mathcal W_\ell \widetilde f\|_{L^2(\Gamma)}                                                          \\
	\lesssim \|\pi_{\mathcal T} P (\mu_{\ell} I+L)^{-1}\widetilde f\|_{L^2(\Gamma)}+\|(\mu_{\ell} I+L_{\mathcal T})^{-1}\pi_{\mathcal T} \Ps \widetilde f\|_{L^2(\Gamma)}.
$$
Estimates  \eqref{i:mu-bound0}, \eqref{i:bound1} together with the stability of the $L^2$-projection \eqref{e:stabl2} and the equivalence of norms \eqref{i:norm-equivalency-ba} guarantee that
$$
	\|\mathcal W_\ell \widetilde f\|_{L^2(\Gamma)}                                                          \\
	\lesssim \mu_{\ell}^{-1}\|\widetilde f\|_{L^2(\gamma)}.
$$
As a consequence, we arrive at the final estimate for this ranges of $\mu$
$$
	\begin{aligned}
		k\sum_{\mu_\ell\ge h^{-2\as/s}} & \mu_{\ell}^{1-s}\|\mathcal W\widetilde f\|_{L^2(\Gamma)}  \lesssim \|\widetilde f\|_{L^2(\gamma)} k\sum_{\mu_\ell \geq h^{-2\as/s}} \mu_{\ell}^{-s}                                                  \\
		                                & \le \|\widetilde f\|_{L^2(\gamma)} \frac{k \mu_{\ell_0}^{-s}}{1-e^{-ks}} \le  h^{2\alpha^*}\|\widetilde f\|_{L^2(\gamma)} \frac{k}{1-e^{-ks}} \lesssim h^{2\alpha^*} \|\widetilde f\|_{L^2(\gamma)},
	\end{aligned}
$$
where $\ell_0$ is the smallest integer satisfying $\mu_{\ell_0}\ge h^{-2\as/s}$. We also note that to derive the last inequality, we used the fact $\mu_\ell = e^{y_\ell}=e^{k\ell}$ and that for all $a>0$, $k/(1-e^{-ka}) \lesssim \tfrac1a$.

\boxed{4} For $\mu_{\ell} \in [1,h^{-2\alpha^*/s})$, we take advantage of the error representation \eqref{e:error_representation_B}, the estimates provided in Lemma~\ref{l:res} with $\alpha=\as\in[0,1]$ and Lemma~\ref{l:W2}  to write
$$
	\begin{aligned}
		k & \sum_{1 \leq \mu_\ell<h^{-2\as/s}}  \mu_{\ell}^{1-s}\|\mathcal W \widetilde f\|_{L^2(\Gamma)}                                  \\
		  & \lesssim h^{2\alpha^*}\|\widetilde f\|_{\HH^{2\delta}(\gamma)}
		k\sum_{1\leq \mu_\ell<h^{-2\as/s}}\mu_{\ell}^{-s+\as-\delta}
		+  h^{2}\|\widetilde f\|_{L^2(\gamma)}
		k\sum_{1 \leq \mu_\ell<h^{-2\as/s}}\mu_{\ell}^{-s}                                                                                 \\
		  & \lesssim  h^{2\as}\|\widetilde f\|_{\HH^{2\delta}(\gamma)}
		\frac{k }{1-e^{-k\epsilon'}} + h^{2}\frac{k }{1-e^{ks}} \|\widetilde f\|_{L^2(\gamma)}                                             \\
		  & \lesssim \frac 1 {\epsilon'} h^{2\as}\|\widetilde f\|_{\HH^{2\delta}(\gamma)} + \frac 1 s h^{2}\|\widetilde f\|_{L^2(\gamma)}.
	\end{aligned}
$$

\boxed{5} For $\mu_{\ell}\in (0,1)$, we proceed similarly as in the previous step invoking Lemma~\ref{l:res} with $\alpha=1$, $\delta=0$ and Lemma~\ref{l:W2} to get
$$
	k\sum_{0<\mu_\ell<1}\mu_{\ell}^{1-s}\|\mathcal W_\ell \widetilde f\|_{L^2(\Gamma)}
	\lesssim h^{2}\|\widetilde f\|_{L^2(\gamma)}k\sum_{0<\mu_\ell<1} \mu_{\ell}^{1-s}\,
	\lesssim h^{2\alpha^*}\|\widetilde f\|_{L^2(\gamma)} .
$$

\boxed{6} Gathering the estimates obtained at each step, we arrive at
$$
	\| P\widetilde u_k - U_k\|_{L^2(\gamma)} \lesssim \frac{h^{-2\epsilon'}}{\epsilon'} h^{2(\delta+s)} \|\widetilde f\|_{H^{2\delta}(\gamma)},
$$
which, upon setting $\epsilon' = (\delta+s)/(2\ln(h^{-1}))$ satisfying $0<2\epsilon' < \delta+s$ thanks to the assumption $h\leq 1/e$,  is the desired $L^2$ estimate for $\delta+s \le 1$.

\boxed{7} The $H^1$ error estimate is obtained identically except that an inverse estimate is used at the beginning of step 3 to get
$$
	\|\mathcal W_\ell \widetilde f\|_{H^1(\Gamma)}                                                          \\
	\lesssim h^{-1}\mu_{\ell}^{-1}\|\widetilde f\|_{L^2(\gamma)}.
$$
The case $\delta+s>1$ also follows using similar arguments, the noticeable differences being that $\as=1$ and thus the estimate is free from $\epsilon'$ terms.
The details are omitted.

\subsubsection{Proof of Corollary~\ref{c:rate}}
We start with the $L^2$-estimate.
The total error is decomposed into the sinc quadrature error and the finite element error
\[
	\|P \widetilde u - U_{k}\|_{L^2(\Gamma)} \le \|P (\widetilde u - u_k)\|_{L^2(\Gamma)}
	+ \|P \widetilde u_k - U_{k}\|_{L^2(\Gamma)}.
\]
For the sinc quadrature error, we invoke the norm equivalence  $\|P.\|_{H^j(\Gamma)}\sim \|.\|_{H^j(\gamma)}$ valid for $j=0,1$ (see \eqref{i:norm-equivalency-ba}) and the sinc quadrature error \eqref{e:sinc-error} with $t=\delta$ and $r=-\delta$.
Theorem~\ref{t:rate} provides the desired results for the space discretization error.
The $H^1$ error estimate follows similarly but upon invoking \eqref{e:sinc-error} with $t=\delta$ and $r=\tfrac12-\delta$.

\section{Conclusions}
We proposed a numerical scheme to approximate the negative powers of the Laplace-Beltrami operator by discretizing the outer integral of the Balakrishnan formula with a sinc quadrature and using a parametric finite element method to approximate the evaluation of the integrant at each quadrature point. We show that if the surface is of class $C^3$ and when the signed distance function is used as lift to define the parametric finite element method, the numerical scheme delivers the same rates of convergence as on Euclidean domains.

Whether a generic lift can be used within the proposed algorithm remains an open question.
The arguments used in the analysis of the algorithm relies on the second order approximation property of the geometric quantities $\sigma$ and $\mathbf{E}$; see for instance Lemma~\ref{l:W2} and \eqref{e:alternate3} in Appendix.
For generic lifts, such as $\mathbf{P}_g$ defined in Section~\ref{s:simulation}, the geometric quantities are only approximated to first order.
To illustrate this fact, we report in Figure~\ref{f:sigma} the behavior of $\| \sigma - 1 \|_{L^\infty(\Gamma)}$ versus the number of degree of freedoms (which is equivalent to $h^{-1/n}$) for the lift $\mathbf{P}_g$. It is conceivable that the arguments provided in \cite{bonito2020finite} and in particular the technical proposition (Proposition~34 in \cite{bonito2020finite}) can be extended in this context to explain the convergence results obtained in Section~\ref{ss:numerical} when using $\mathbf{P}_g$.

\begin{figure}[hbt!]
	\begin{center}
		\includegraphics[scale=0.5]{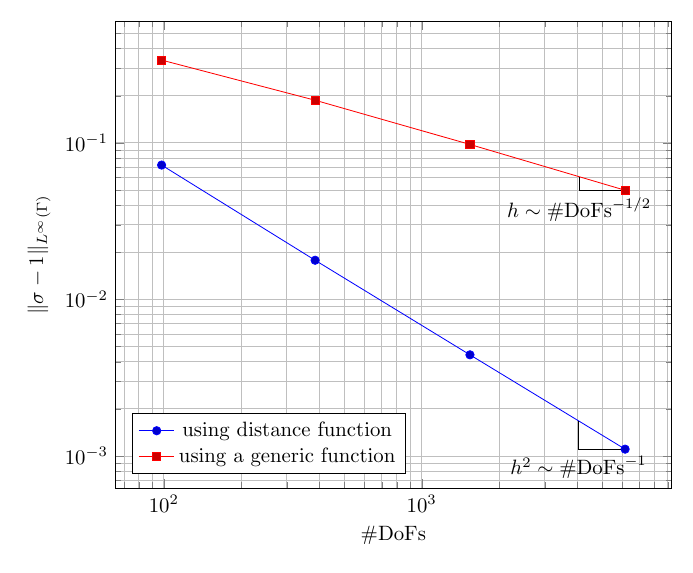}
	\end{center}
	\caption{Behavior $\| \sigma - 1\|_{L^\infty(\Gamma)}$ versus the number of degree of freedoms using $\mathbf{P}$ and $\mathbf{P}_g$. The space discretization setting is the same as we introduced in Section~\ref{s:simulation}. Here we approximate the norm by computing the maximum of $|\sigma - 1|$ at all 6th-order Gaussian quadrature points in each cell of $\mathcal T$.}
	\label{f:sigma}
\end{figure}

\appendix
\section{Proof of Proposition~\ref{p:approximation}}\label{a:error}
We start by noting that
\[
	\| (P T - T_{\mathcal T} \pi_{\mathcal T}\Ps) \widetilde f\|_{H^1(\Gamma)}\le \| P T \widetilde f\|_{H^1(\Gamma)} + \|T_{\mathcal T}  \pi_{\mathcal T} \Ps \widetilde f\|_{H^1(\Gamma)}
\]
so that the stability of the $L^2$ projection \eqref{e:stabl2}, the norm equivalence property \eqref{i:norm-equivalency-ba} and standard energy estimates yield
\begin{equation}\label{e:first_inter}
	\|( P T - T_{\mathcal T}\pi_{\mathcal T} \Ps)\widetilde f\|_{H^1(\Gamma)}\lesssim \| \widetilde f \|_{H^{-1}(\gamma)}.
\end{equation}
Whence, the interpolation between the above estimate and the standard $H^1$ error estimate \eqref{i:error-lb} for the Laplace-Beltrami problem imply that
$$
	\|( P T - T_{\mathcal T}\pi_{\mathcal T} \Ps) \widetilde f\|_{H^1(\Gamma)}\lesssim h^\alpha \| \widetilde f \|_{H^{\alpha-1}(\gamma)}.
$$
To derive  \eqref{i:sol-l2proj-bound-h1} it suffices to note that the approximation property of the $L^2$ projection \eqref{i:l2-proj-approx} for $t=1$ and $r=\alpha$, the equivalence of norms \eqref{i:norm-equivalency-ba}, and the regularity estimate \eqref{e:regularity-estim} guarantee that
$$
	\|( P T  - \pi_{\mathcal T} PT) \widetilde f \|_{H^1(\Gamma)} \lesssim h^\alpha \| PT \widetilde f\|_{H^{1+\alpha}(\Gamma)}
	\lesssim h^\alpha \|  \widetilde f\|_{H^{\alpha -1}(\gamma)}.
$$

To prove  \eqref{i:sol-l2proj-bound-r}, we argue again that in view of the approximation properties of the $L^2$ projection \eqref{i:l2-proj-approx}, it suffices to show that for $\alpha \in [0,1]$.
\begin{equation}\label{e:alternate2}
	\| \widetilde e \|_{H^{1-\alpha}(\gamma)}\lesssim h^{2\alpha} \|\widetilde f\|_{\HH^{\alpha-1}(\gamma)},
\end{equation}
where $\tilde e := (T - P^{-1}T_{\mathcal T} \pi_{\mathcal T}\Ps) \widetilde f$. Note that $\tilde e$ does not have vanishing mean value on $\gamma$.
However, we have
$$
	\bar{e} :=\frac1{|\gamma|}\int_\gamma \widetilde e
	= -\frac1{|\gamma|}\int_{\Gamma} T_{\mathcal T} \pi_{\mathcal T}\Ps \widetilde f \sigma
	= \frac1{|\gamma|}\int_{\Gamma} T_{\mathcal T} \pi_{\mathcal T}\Ps \widetilde f (1- \sigma)
$$
because $\int_\gamma T \widetilde f = \int_{\Gamma} T_{\mathcal T} \pi_{\mathcal T}\Ps \widetilde f = 0$.
As a consequence, in view of estimate \eqref{i:q-estimate} for $\| 1-\sigma\|_{L^\infty(\Gamma)}$, the equivalence of norms \eqref{i:norm-equivalency-ba} and  the estimate
\begin{equation}\label{e:inter2}
	\| T_{\mathcal T}  G \|_{H^1(\Gamma)} \lesssim \| G \|_{H^{-1}(\Gamma)}  \qquad \forall G \in \vh,
\end{equation}
we deduce that
$$
	\| \bar e \|_{H^{1-\alpha}(\gamma)} \lesssim | \bar e | \lesssim h^2 \|T_{\mathcal T} \pi_{\mathcal T}\Ps \widetilde f\|_{L^2(\Gamma)} \lesssim h^2 \| \widetilde f \|_{L^2(\gamma)}
$$
and so
\begin{equation}\label{e:alternate3}
	\| \widetilde  e \|_{H^{1-\alpha}(\gamma)} \lesssim \| \widetilde  e - \bar e  \|_{H^{1-\alpha}(\gamma)} + h^2 \| \widetilde f \|_{L^2(\gamma)}.
\end{equation}

To estimate $\| \widetilde  e - \bar e  \|_{H^{1-\alpha}(\gamma)}$, we borrow a duality argument proposed in \cite[Theorem~38]{bonito2020finite} by proceeding as follows. We compute
\[
	\begin{split}
		\|\widetilde e - \bar e\|_{\HH^{1-\alpha}(\gamma)} &= \sup_{\widetilde w\in\HH^{\alpha-1}_\#(\gamma)}
		\frac{\langle\widetilde e - \bar e, \widetilde w\rangle_{\HH^{1-\alpha}_\#(\gamma)}}{\|\widetilde w\|_{\HH^{\alpha-1}(\gamma)}} \\
		& = \sup_{\widetilde w\in \HHs^{\alpha-1}(\gamma)}
		\frac{a_{\gamma}(\widetilde e-\bar e, T\widetilde w)}{\|\widetilde w\|_{\HH^{\alpha-1}(\gamma)}} \\
		& = \sup_{\widetilde w\in \HHs^{\alpha-1}(\gamma)}
		\frac{a_{\gamma}(\widetilde e, T\widetilde w - P^{-1} T_{\mathcal T} \pi_{\mathcal T} P_\# T \widetilde w) + a_{\gamma}(\widetilde e, P^{-1} T_{\mathcal T}  \pi_{\mathcal T}P_\# T \widetilde w)}{\|\widetilde w\|_{\HH^{\alpha-1}(\gamma)}}.
	\end{split}
\]
For the first term, we invoke the norm equivalence property \eqref{i:norm-equivalency-ba} to write
$$
	a_{\gamma}(\widetilde e, T\widetilde w - P^{-1} T_{\mathcal T} \pi_{\mathcal T} P_\# T \widetilde w)
	\lesssim \|P\widetilde e\|_{H^1(\Gamma)}\|PT\widetilde w - T_{\mathcal T}  \pi_{\mathcal T}P_\# T \widetilde w\|_{H^1(\Gamma)}.
$$
This,  coupled with the estimate \eqref{e:first_inter} and the regularity estimate \eqref{e:regularity-estim} for $s=1$ and $r =\alpha-1$,   yield
\begin{equation*}
	\begin{split}
		a_{\gamma}(\widetilde e, T\widetilde w - P^{-1} T_{\mathcal T} \pi_{\mathcal T} P_\# T \widetilde w) &\lesssim h^{2r}\|\widetilde f\|_{\HH^{\alpha-1}(\gamma)}\|T\widetilde w\|_{\HH^{\alpha+1}(\gamma)} \\
		&\lesssim h^{2r}\|\widetilde f\|_{\HH^{\alpha-1}(\gamma)}\|\widetilde w\|_{\HH^{\alpha-1}(\gamma)}.
	\end{split}
\end{equation*}

For the second term, we use the definition of $\widetilde e$ and take advantage of the geometric estimate \eqref{e:geom_bilinear} relating the bilinear forms $a_\Gamma$ and $a_\gamma$ to show that
\[
	a_{\gamma}(\widetilde e,P^{-1} T_{\mathcal T} \pi_{\mathcal T}P_\# T \widetilde w) =  \int_{\gamma} \nabla_\gamma (P^{-1}T_{\mathcal T}\pi_{\mathcal T}\Ps \widetilde f)\cdot \mathbf E\nabla_\gamma (P^{-1} T_{\mathcal T} \pi_{\mathcal T}P_\# T \widetilde w),
\]
where $\| \mathbf E \|_{L^\infty(\gamma)} \lesssim h^2$ according to \eqref{e:E}.
Whence, the equivalence of norms \eqref{i:norm-equivalency-ba} and estimate \eqref{e:inter2} imply that
$$
	\begin{aligned}
		a_{\gamma}(\widetilde e,P^{-1} T_{\mathcal T} & \pi_{\mathcal T}P_\# T \widetilde w)                                                                                                                          \\
		                                              & \lesssim h^2 \|T_{\mathcal T}\pi_{\mathcal T}\Ps \widetilde f\|_{H^1(\Gamma)} \|  T_{\mathcal T} \pi_{\mathcal T}P_\# T \widetilde w\|_{H^1(\Gamma)} \lesssim
		h^2 \|\widetilde f\|_{L^2(\gamma)} \|  \widetilde w\|_{H^{\alpha-1}(\gamma)}.
	\end{aligned}
$$

Returning to $\| \widetilde e - \bar e\|_{H^{1-\alpha}(\gamma)}$ we obtain
$$
	\| \widetilde e - \bar e\|_{H^{1-\alpha}(\gamma)} \lesssim  h^{2r}\|\widetilde f\|_{\HH^{\alpha-1}(\gamma)}
$$
and thus \eqref{e:alternate2} follows from \eqref{e:alternate3}.
This ends the proof.

\bibliographystyle{plain}
\bibliography{FractionalLaplaceBeltrami}

\end{document}